\newtheorem{theorem}{Theorem}%[section]
\newtheorem{corollary}{Corollary}%[section]
\newtheorem{lemma}{Lemma}%[section]
\newtheorem{remark}{Remark}%[section]
\newtheorem{proposition}{Proposition}%[section]
\newcommand{\setsep}{\,\big|\,}
\newcommand{\ptset}{\mathcal}
\begin{document}

%%%%%%%%%%%%%%%%%%%%%%%%%%%%%%
%%%%%%%%%%%%%%%%%%%%%%%%%%%%%%
\begin{frontmatter}
\title{On the existence and estimates of nested spherical designs}
\author[1]{Ruigang Zheng}
\ead{ruigzheng2-c@my.cityu.edu.hk}

\author[1]{Xiaosheng Zhuang\corref{cor1}}
\ead{xzhuang7@cityu.edu.hk}

\cortext[cor1]{Corresponding author}
\affiliation[1]{organization = {Department of Mathematics, City University of Hong Kong},
			city={Hong Kong SAR},
			country={China}
}

%\maketitle\thanks{Department of Mathematics, City University of Hong Kong (email: ruigzheng2-c@my.cityu.edu.hk)},\thanks{Department of Mathematics, City University of Hong Kong (email: xzhuang7@cityu.edu.hk)}
%\def\thefootnote{1}\footnotetext{Corresponding author}

\begin{abstract}
In this paper, we prove the existence of a spherical $t$-design formed by adding extra points to an arbitrarily given point set on the sphere and, subsequently, deduce the existence of nested spherical designs. Estimates on the number of required points are also given. For the case that the given point set is a spherical $t_1$-design such that $t_1 <  t$ and the number of points is of optimal order $t_1^d$, we show that the upper bound of the total number of extra points and given points for forming nested spherical $t$-design is of order $t^{2d+1}$. A brief discussion concerning the optimal order in nested spherical designs is also given.
\end{abstract}

\begin{keyword}
spherical $t$-design \sep nested structure \sep optimal order.
\end{keyword}

\end{frontmatter}
%%%%%%%%%%%%%%%%%%%%%%%%%%%%%%
%%%%%%%%%%%%%%%%%%%%%%%%%%%%%%

%%%%%%%%%%%%%%%%%%%%%%%%%%%%%%
%%%%%%%%%%%%%%%%%%%%%%%%%%%%%%
%%%%%%%%%%%%%%%%%%%%%%%%%%%%%%
%%%%%%%%%%%%%%%%%%%%%%%%%%%%%%
\section{Introduction and motivation}\label{sec:intro}

In function approximation on the $d$-dimensional unit sphere $\mathbb{S}^d$, approaches such as hyperinterpolation \cite{sloan1995polynomial}, multiscale analysis \cite{gia2010multiscale}, localized systems \cite{le2008localized,narcowich2006localized}, and {spherical} framelets \cite{wang2020tight,xiao2023spherical} all rely on exact quadrature rules for spherical harmonics. Generally, the approximation error decreases as the degree of spherical harmonics applied increases. On the other hand, quadrature rules of spherical harmonics with higher degrees typically require more points. Consequently, the number of points in quadrature rules becomes the main factor of computational and storage efficiency for fine approximation. Apart from the number of points of quadrature rules, another factor that incurs storage burden is the non-nested structures of quadrature rules. For framelet systems such as those in  \cite{wang2020tight,xiao2023spherical}, it is required to form approximation successively using spherical harmonics from lower to higher degrees. For each degree,  the framelet systems utilize quadrature rules that are exact for that degree (including lower degrees) but not higher ones since they do not need excessive points for exact integration. However, unlike data on Euclidean domains such as signals, images, and videos that are defined on the regular grids with easy down- and up-sampling operations (dyadic operations),  in most commonly used quadrature rules on the sphere such as the Gauss–Legendre tensor product rule \cite{GL}, the point sets for spherical harmonics of lower degree are not contained in the point sets of higher degree, that is,  the point sets of different degrees are not nested. Thus, in each phase of computation, with respect to a certain degree, one has to apply a different quadrature rule, which results in separate (extra) storage in spherical signal processing.

Among all quadrature rules on the sphere, one of the most well-known ones is the so-called \emph{spherical designs} \cite{STD},  thanks to its deep theoretical connections and wide practical impacts in many fields such as approximation theory,   statistics for experimental design, combinatorics, geometry, coding theory, and so on. The concept of spherical design was first introduced by Delsarte, Goethals, and Seidel \cite{delsarte1991spherical} in 1977. {In detail, a set $\ptset{X}_N:=\{x_1,\dots,x_N\}$} \footnote[1]{It is understood that the points in the set can be identical, i.e. $\mathcal{X}_N$ is a multi-set. This is also assumed for other point sets and spherical designs in $\mathbb{S}^d$ in what follows.}  {of  $N$-points on $\mathbb{S}^d:=\{x\in\mathbb{R}^{d+1}\setsep \, |x|=1\}$ is a \emph{spherical $t$-design} if} 
\[
	\int_{\mathbb{S}^d} P(x)d\mu_d(x) = \frac{1}{N}\sum_{i=1}^N P(x_i)
\]
for all polynomials $P$ with $d+1$ variables and $t$ degree, where $\mu_d$ is the normalized Lebesgue measure of $\mathbb{S}^d$ such that $\mu_d(\mathbb{S}^d)=1$ and $|\cdot|$ is the Euclidean norm of $\mathbb{R}^{d+1}$. One of the fundamental research topics on spherical $t$-design is to find the theoretical upper bounds for the minimal number of points $N=N(d,t)$ for a spherical $t$-design. A lower bound of $N=N(d,t)$ is provided in \cite{delsarte1991spherical} stating that
\[
	N(d,t)\geq \left\{\begin{aligned}
	\binom{d+k}{d}+\binom{d+k-1}{d} \quad &\text{if } t=2k,\\
	2\binom{d+k}{d}\quad \quad \quad \quad \quad &\text{if } t=2k+1.
\end{aligned}\right.
\]
This implies that $N(d,t)\geq c_dt^d$, where $c_d$ is a constant depending only on $d$.  The upper bound of $N(d,t)$ is subsequently proved in \cite{wagner1991averaging}, \cite{bajnok1992construction}, and \cite{korevaar1993spherical} that  $N(d,t)\leq C_dt^{Cd^4}$, $N(d,t)\leq C_dt^{Cd^3}$, and $N(d,t)\leq C_dt^{(d^2+d)/2}$, respectively,  for some constant $C_d$ depending on $d$ and some fixed constant $C$. It is also conjectured in \cite{korevaar1993spherical} that $N(d,t)\leq C_dt^d$. The upper bound is improved in \cite{bondarenko2010spherical} by showing that $N(d,t)\leq C_dt^{2d(d+1)/(d+2)}$ based on the Brouwer fixed point theorem. The conjecture is eventually proved in \cite{ostd} by applying the Brouwer degree theory. This concludes that $t^d$ is the optimal order of the spherical $t$-design. In \cite{bondarenko2015well}, the existence of well-separated spherical $t$-designs with optimal order is further shown.

{Another closely related notion is the finite sequences in $\mathbb{C}^d$ named \emph{projective $t$-designs} \cite{iverson2021note} (also called \emph{spherical $(t,t)$-designs} \cite{waldron2017sharpening}). For finite sequences in which vectors are all unit norms, such designs have several equivalent characterizations stating that these designs are finite sequences that give equality in the Welch bounds, induce unit norm tight frames in the space of symmetric $t$-tensors on $\mathbb{C}^d$, and integrate all homogeneous $t$-degree polynomials in $z$ and $\bar{z}$ on the complex unit sphere by taking discrete sums as in spherical $t$-designs \cite{waldron2017sharpening}. Consequently, results from tight frame completion \cite[Proposition 21]{casazza2013introduction} (see also \cite{feng2006generation,massey2008tight}) can be used to discuss the number of extra vectors for extending a given finite sequence to a projective $t$-design, which is similar to the problem of extending a given point set to a spherical $t$-design.  For unit norm finite sequences, the vectors integrate all homogeneous $2t$-degree (and all $2s$ degrees, $2\leq 2s<2t$) polynomials on the real unit sphere by taking discrete sums. However, for $t>1$, it is impossible for these sequences to be equivalently characterized as tight frames in certain spaces of symmetric tensors \cite[Example 4.1]{waldron2017sharpening}. Thus, results from frame completion can not be applied to the real case unless $t=1$.}

{For a $t_1$-design $\mathcal{Y}$ with $t_1 < t$,  if it can be extended into a $t$-design by adding another point set $\mathcal{X}$, then we say that these $t_1$-design $\mathcal{Y}$ and $t$-design $\mathcal{X} \cup \mathcal{Y}$ are nested.} Given the above discussion, it is natural to ask the following questions.
\begin{itemize}
\item[{\rm Q1)}] Do nested spherical designs exist?

\item[{\rm Q2)}] If nested spherical designs exist, then how many points are needed to extend a $t_1$-design $(t_1< t)$ to a $t$-design? 

\item[{\rm Q3)}] What is the optimal order for the number of points needed to extend a $t_1$-design $(t_1 < t)$ to a $t$-design? 
\end{itemize}

To the best of our knowledge, such a concept of nested spherical designs and the above theoretical questions on nested spherical designs have not been considered in the literature. In this paper, we answer the first two questions completely and briefly discuss Q3.

For Q1, given a spherical $t_1$-design $\ptset{Y}_M:=\{y_1,\dots,y_M\}\subset \mathbb{S}^d$ with $t_1 <  t$, we prove that for sufficiently large $N$, there exist a point set $\ptset{X}_N= \{x_1,\dots,x_N\}\subset \mathbb{S}^d$ such that $\ptset{X}_N\cup\ptset{Y}_M$ is a spherical $t$-design, which leads to the nested spherical designs $\ptset{Y}_M \subset (\ptset{X}_N\cup\ptset{Y}_M)$. In fact, we prove a stronger statement with the given point set being an arbitrary point set $\ptset{Y}_M\subset\mathbb{S}^d$. The proof of this statement is a simple extension of the result in \cite{ostd}.

For Q2, we provide more precise estimates of $N$. The estimates of $N$ are based on the estimates of the linear functionals on a compact subset of polynomial spaces. Eventually, we conclude that for a spherical $t_1$-design $\ptset{Y}_M$ with the optimal order $M\sim t_1^d$, an upper bound of the minimal $N+M$ is of order $t^{2d+1}$ for the nested extension $\ptset{X}_N\cup \ptset{Y}_M$ to be a spherical $t$-design with $t_1 < t$.

In contrast to the optimal order $t^d$ for a spherical $t$-design, our result for Q2 shows that an upper bound of $N+M$ is of order $t^{2d+1}$.  It is interesting to know whether there exists any (non-trivial) nested spherical $t$-design of optimal order $t^d$. For Q3, we conjecture that the optimal order of $N+M$ is $t^d$ given some $t_1$-designs of order $t_1^d$ and give a brief discussion on the case where the constants in the orders are required to be identical.

This paper is organized as follows. Preliminaries are provided in Section \ref{sec:pre}. Existence, estimates, and a discussion of optimal order are given in Section \ref{sec:existence}, \ref{sec:estimates}, and \ref{sec:discuss}, respectively. Conclusion and final remarks are given in Section \ref{sec:conclude}. Some proofs are provided in the Appendix.

%%%%%%%%%%%%%%%%%%%%%%%%%%%%%%
%%%%%%%%%%%%%%%%%%%%%%%%%%%%%%
%%%%%%%%%%%%%%%%%%%%%%%%%%%%%%
%%%%%%%%%%%%%%%%%%%%%%%%%%%%%%
\section{Preliminaries}\label{sec:pre}
Let $\mathcal{P}_t:=\mathcal{P}_t(\mathbb{S}^d)$ be the Hilbert space of polynomials $P$ on $\mathbb{S}^d$ of degree at most $t$ such that
\[
	\int_{\mathbb{S}^d}P(x)d{\mu_d}(x)=0,
\]
and equipped with the usual inner product
\[
	\langle P,Q\rangle :=  \int_{\mathbb{S}^d}P(x)Q(x)d{\mu_d}(x).
\]
$\mathcal{P}_t$ is a finite dimension space spanned by the (real-valued) spherical harmonics (\cite{muller2006spherical,bondarenko2010spherical,dai2013approximation}) 
\[
\{Y_{\ell,k} \setsep \ell = 1,\dots,t,k\allowbreak = 1,\dots,D(\ell,d)\},
\]  
where $D(\ell,d)$ is defined as 
\[
	D(\ell,d) := \left\{\begin{aligned}
	\frac{2\ell+d-1}{\ell+d-1}\binom{\ell+d-1}{\ell} &\text{ if } \ell\geq 1,\\
	1 \ \ \ \ \ \ \ \ \ \ \ \ \ \ &\text{  if } \ell =0. \\
	\end{aligned}\right.
\]
We use $D_t$ to denote the dimension of $\mathcal{P}_t$. Note that $\sum_{i=0}^\ell D(i,d) = D(\ell,d+1)$ \cite{sloan2009variational} and thus $D_t = \sum_{i=1}^t D(i,d) = D(t,d+1)-1$. The $0$-degree spherical harmonic $Y_{0,1}\equiv \omega_d^{-1/2}$ is not included in $\mathcal{P}_t$ with
\[
	\omega_d := \int_{\mathbb{S}^d} d{\lambda_d(x)} = \frac{2\pi^{(d+1)/2}}{\Gamma((d+1)/2)},
\]
where  $\Gamma$ is the usual Gamma function and $\lambda_d$ is the (unnormalized) Lebesgue measure of $\mathbb{S}^d$.   
Note that $\omega_d$ is the surface area of $\mathbb{S}^d$ and $d\mu_d =\omega_d^{-1}d\lambda_d$.  Moreover, the $Y_{\ell,k}$'s are orthonormal with respect to the inner product
\[
	(P,Q):= \int_{\mathbb{S}^d}P(x)Q(x)d{\lambda_d}(x)=\omega_d\cdot \langle P,Q \rangle.
\]
In this paper,  the $L^1$-, $L^2$- and $L^\infty$-norms with respect to $\lambda_d$ are denoted as $\Vert \cdot \Vert_1$, $\Vert \cdot \Vert_2$ and  $\Vert \cdot \Vert_\infty$, respectively. 

The addition formula of spherical harmonics \cite{muller2006spherical} states that
\begin{equation}\label{at}
	\sum_{k=1}^{D(\ell,d)}Y_{\ell,k}(x)Y_{\ell,k}(y) = \frac{D(\ell,d)}{\omega_d}P_{\ell,d}(\langle x, y\rangle), \quad x,y \in \mathbb{S}^d,
\end{equation}
where $P_{\ell,d}$ is the Legendre polynomial of $\ell$-degree and dimension $d$, and $\langle x,y\rangle$ is the usual inner product for $x,y\in\mathbb{R}^{d+1}$. It is straightforward to verify that
\[
	k_t(x,y) := \omega_d\sum_{\ell=1}^t \sum_{k=1}^{D(\ell,d)} Y_{\ell,k}(x)Y_{\ell,k}(y), \quad x,y \in \mathbb{S}^d, 
\]
is a reproducing kernel of $\mathcal{P}_t$ under $\langle \cdot,\cdot \rangle$.
We obtain from (\ref{at}) that 
\[
	\sum_{k=1}^{D(\ell,d)}\left(Y_{\ell,k}(y)\right)^2 = \frac{D(\ell,d)}{\omega_d},\quad y\in \mathbb{S}^d,
\]
by setting $x=y$ and using $P_{\ell,d}(1)\equiv1$. 

For any $\ell$-degree spherical harmonics $Q$, we have the orthonormal expansion
\[
	Q = \sum_{k=1}^{D(\ell,d)}( Q,Y_{\ell,k} ) Y_{\ell,k}.
\]
Therefore,
\[
	|Q(x)|^2 \leq \sum_{k=1}^{D(\ell,d)} ( Q,Y_{\ell,k} )^2 \sum_{k=1}^{D(\ell,d)} \left(Y_{\ell,k}(y)\right)^2 = \Vert Q \Vert_2^2\frac{D(\ell,d)}{\omega_d}, \quad x\in\mathbb{S}^d. 
\]
and thus
\begin{equation}\label{nieq1}
\Vert Q\Vert_\infty \leq \Vert Q \Vert_2 \left(\frac{D(\ell,d)}{\omega_d}\right)^{1/2}.
\end{equation}

By the Riesz representation theorem, for each point $x\in \mathbb{S}^d$, there exists a unique polynomial $G_x\in\mathcal{P}_t$ such that
\begin{equation}\label{Eq1}
	\langle G_x,Q\rangle= Q(x)\text{ for all }Q\in\mathcal{P}_t. 
\end{equation}
By the reproducing kernel,  such a $G_x$ is explicitly given by
\begin{equation}\label{rpk}
	G_x(\cdot) = \omega_d\sum_{\ell=1}^t \sum_{k=1}^{D(\ell,d)} Y_{\ell,k}(x)Y_{\ell,k}(\cdot) = k_t(x,\cdot).
\end{equation}
Then it is obvious that  $\ptset{X}_N=\{x_1,\dots,x_N\}\subset \mathbb{S}^d$ is  a spherical $t$-design if and only if
\[
	G_{x_1} + \dots + G_{x_N} =0.
\]

The gradient of a differentiable function $f:\mathbb{R}^{d+1}\to \mathbb{R}$  is denoted by
\[
	\frac{\partial f}{\partial x}:=\left( \frac{\partial f}{\partial \xi_1},\dots,\frac{\partial f}{\partial \xi_{d+1}}\right),\quad x=(\xi_1,\dots,\xi_{d+1}).
\]
For a polynomial $Q\in\mathcal{P}_t$, we define the {\it spherical gradient} by
\[
	\nabla Q(x):= \frac{\partial}{\partial x}\left(Q\left(\frac{x}{|x|}\right)\right).
\] 
The $i$th component of $\nabla Q$ is denoted as $\nabla_i Q:=\frac{\partial Q}{\partial \xi_i}$, $1\leq i \leq d+1$. Note that $\int_{\mathbb{S}^d} |\nabla Q(x)|d\mu_d(x)$ is a norm on $\mathcal{P}_t$. 

For a partition $\mathcal{R}:=\{R_1,\dots,R_N\}$, where $R_i \subset \mathbb{S}^d$ are closed sets such that $\cup_{i=1}^N R_i= \mathbb{S}^d$  and $\mu_d(R_i\cap R_j)=0$ for all $1\leq i < j\leq N$, it is called \emph{area-regular} if $\mu_d(R_i) = 1/N$ for all $1\le i\le N$ and the \emph{partition norm} of $\mathcal{R}$ is denoted as
\[
	\Vert \mathcal{R} \Vert:= \underset{R\in \mathcal{R}}{\max}\text{ diam}(R),
\]
where $\text{diam}(R)$ is the maximum geodesic distance between two points in ${R}$. Each {member} $R_i$ of $\mathcal{R}$ is called a cell of $\mathcal{R}$.

We say that $N$ is of order $h(t_1,t,d)$ for some function $h$ depending on  $t_1,t,d$ if  $N = C_d\cdot h(t_1,t,d)$ for some constant $C_d$ depending only on $d$ but not the others.  The notion $N\sim t^d$ indicates equivalence relation $C_1 t^d\le N\le C_2 t^d$ with constants $C_1, C_2$ possibly depending on $d$ but not $t$.

%%%%%%%%%%%%%%%%%%%%%%%%%%%%%%
%%%%%%%%%%%%%%%%%%%%%%%%%%%%%%
%%%%%%%%%%%%%%%%%%%%%%%%%%%%%%
%%%%%%%%%%%%%%%%%%%%%%%%%%%%%%
\section{Existence of nested spherical designs}\label{sec:existence}

In this section, we prove the existence of a spherical $t$-design containing an arbitrarily prescribed point set $\ptset{Y}_M=\{y_1,\dots, y_M\}\subset \mathbb{S}^d$, which implies the existence of nested spherical designs.

We first state a fundamental theorem from the Brouwer degree theory \cite[Theorem 1.2.9]{cho2006topological} and some key results from \cite{ostd}. Our proof of the existence of nested spherical designs relies on such results.

%%%%
\renewcommand*{\thetheorem}{\Alph{theorem}}
\begin{theorem}[\cite{cho2006topological}]\label{basic}
Let $f: \mathbb{R}^n \to \mathbb{R}^n$ be a continuous mapping and $\Omega\subset \mathbb{R}^n$ be an open bounded subset with boundary $\partial\Omega$ such that $0\in \Omega$. If $\langle x,f(x)\rangle>0$ for all $x\in\partial\Omega$, then there exists $x\in\Omega$ satisfying $f(x)=0$.
\end{theorem}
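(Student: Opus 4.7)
The plan is to deduce the conclusion directly from Brouwer degree theory, exploiting two of its standard properties: the homotopy invariance of the degree and the normalization $\deg(\mathrm{id}, \Omega, 0) = 1$ whenever $0 \in \Omega$. The idea is to connect $f$ to the identity map through a straight-line homotopy, and to use the angle hypothesis on $\partial\Omega$ to guarantee that the homotopy stays away from $0$ on the boundary, so that the degree is well-defined and constant along the homotopy.

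Concretely, I would introduce
\[
H(s,x) := (1-s)\, x + s\, f(x), \qquad (s,x) \in [0,1] \times \overline{\Omega},
\]
which interpolates between $\mathrm{id}$ at $s=0$ and $f$ at $s=1$. The crucial step is to verify that $H(s,x) \neq 0$ for every $(s,x) \in [0,1] \times \partial\Omega$. Pairing with $x$ yields
\[
\langle x, H(s,x)\rangle = (1-s)\,|x|^2 + s\, \langle x, f(x)\rangle.
\]
Since $0 \in \Omega$ forces $|x| > 0$ on $\partial\Omega$ and the hypothesis gives $\langle x, f(x)\rangle > 0$ there, both summands are non-negative with at least one strictly positive for every $s \in [0,1]$. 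Hence the inner product is strictly positive, and in particular $H(s,x) \neq 0$.

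With the non-vanishing of $H$ on $[0,1] \times \partial\Omega$ established, the homotopy invariance of the Brouwer degree gives $\deg(f, \Omega, 0) = \deg(\mathrm{id}, \Omega, 0) = 1$, and the solvability property of a nonzero degree produces some $x \in \Omega$ with $f(x) = 0$. The main obstacle is really just a verification at the two endpoints of the homotopy: one of the two summands in $\langle x, H(s,x)\rangle$ vanishes at $s=0$ or at $s=1$, and the argument goes through precisely because the two hypotheses ($0 \in \Omega$ and the angle condition) rescue these two cases separately. No compactness or regularity beyond the continuity of $f$ is required, so the proof is essentially a direct appeal to the two axiomatic properties of degree recalled above.
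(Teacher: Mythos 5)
Your argument is correct and complete: the straight-line homotopy $H(s,x)=(1-s)x+sf(x)$ is nonvanishing on $[0,1]\times\partial\Omega$ because $\langle x,H(s,x)\rangle=(1-s)|x|^2+s\langle x,f(x)\rangle>0$ there, so homotopy invariance and the normalization $\deg(\mathrm{id},\Omega,0)=1$ give $\deg(f,\Omega,0)=1$ and hence a zero of $f$ in $\Omega$. The paper does not prove this statement itself — it is quoted from the cited degree-theory reference — and your proof is precisely the standard argument used there, so there is nothing to add.
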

%%%%

Since $\mathcal{P}_t$ is a finite-dimensional space with a norm: $\int_{\mathbb{S}^d} |\nabla P(x)|d\mu_d(x)$, $P\in\mathcal{P}_t$, we can consider 
a subset $\Omega$ of $\mathcal{P}_t$ defined by
\begin{equation}\label{Eq2}
	\Omega:= \left \{ P\in \mathcal{P}^t {\Biggr{|}} \int_{\mathbb{S}^d} |\nabla P(x)|d\mu_d(x)<1\right\}.  
\end{equation}
Then $\Omega$ is open and bounded in $\mathcal{P}_t$ containing $0$ and
\begin{equation}\label{def:pOmega}
\partial\Omega=\left \{P\in\mathcal{P}^t {\Biggr{|}} \int_{\mathbb{S}^d} |\nabla P(x)|d\mu_d(x)=1\right\}.
\end{equation}
One can apply Theorem \ref{basic} to $\Omega$ and deduce the following result.

%%%%
\renewcommand*{\thecorollary}{\Alph{corollary}}
\begin{corollary}[\cite{ostd}]\label{cor:A}
Suppose  $F:\mathcal{P}_t \to (\mathbb{S}^d)^N$ with $F(P) = (x_1(P),\ldots, x_N(P))$ is a continuous mapping such that for all $P\in\partial\Omega$,
\[
	\sum^N_{i=1}P(x_i(P)) = \left \langle \sum^N_{i=1}G_{x_i(P)},P\right \rangle >0.
\]
Then there exists a spherical $t$-design in $\mathbb{S}^d$ consisting of $N$ points. 
\end{corollary}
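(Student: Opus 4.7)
The plan is to reduce the corollary directly to Theorem \ref{basic} by constructing a suitable continuous self-map of $\mathcal{P}_t$ whose zero set encodes the spherical $t$-design condition. The key fact already recorded in the excerpt is that $\{x_1,\dots,x_N\}\subset\mathbb{S}^d$ is a spherical $t$-design if and only if $\sum_{i=1}^N G_{x_i}=0$ in $\mathcal{P}_t$, where $G_x$ is the Riesz representer given by the reproducing kernel in (\ref{rpk}). So it suffices to produce some $P^{*}\in\Omega$ at which $\sum_{i=1}^N G_{x_i(P^{*})}=0$.

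First I would define $\Psi:\mathcal{P}_t\to\mathcal{P}_t$ by $\Psi(P):=\sum_{i=1}^N G_{x_i(P)}$, where $(x_1(P),\dots,x_N(P))=F(P)$. Continuity of $\Psi$ follows from the continuity of $F$ together with the continuity of the map $x\mapsto G_x=k_t(x,\cdot)$, which is immediate since $k_t$ is polynomial in both arguments. Fixing an orthonormal spherical harmonic basis $\{Y_{\ell,k}\}$ identifies $\mathcal{P}_t$ equipped with $\langle\cdot,\cdot\rangle$ isometrically with $\mathbb{R}^{D_t}$ equipped with its standard Euclidean inner product; under this identification, the set $\Omega$ in (\ref{Eq2}) is an open bounded subset of $\mathbb{R}^{D_t}$ containing the origin, with boundary described by (\ref{def:pOmega}), so Theorem \ref{basic} is applicable in principle.

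It then remains to verify the positivity condition on $\partial\Omega$. By the reproducing identity (\ref{Eq1}),
\[
\langle \Psi(P),P\rangle=\sum_{i=1}^N\langle G_{x_i(P)},P\rangle=\sum_{i=1}^N P(x_i(P)),
\]
which is strictly positive for every $P\in\partial\Omega$ by the hypothesis of the corollary. Theorem \ref{basic} then supplies some $P^{*}\in\Omega$ with $\Psi(P^{*})=0$, i.e. $\sum_{i=1}^N G_{x_i(P^{*})}=0$, and the characterization recalled above immediately yields the desired spherical $t$-design $\{x_1(P^{*}),\dots,x_N(P^{*})\}$ of $N$ points.

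I do not foresee any substantial obstacle once $\Psi$ has been identified: the reproducing property converts the hypothesis of the corollary directly into the inequality required by Theorem \ref{basic}, and everything else is mechanical. The only point requiring a little bookkeeping is that $\Omega$ is defined through the $L^1$ norm of the spherical gradient rather than through the $L^2$ norm natural to $\langle\cdot,\cdot\rangle$; this only affects the geometric shape of $\Omega$ and not the applicability of Theorem \ref{basic}, which accepts any open bounded neighborhood of $0$ in a finite-dimensional inner product space.
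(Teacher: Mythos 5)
Your proposal is correct and follows essentially the same route as the paper: the map $\Psi$ you define is exactly the composition $f=L\circ F$ used there, and the reproducing property plus Theorem~\ref{basic} applied to $\Omega$ yields the zero of $\sum_i G_{x_i(P)}$ in the same way. Your additional remarks on the continuity of $x\mapsto G_x$ and the irrelevance of which norm defines $\Omega$ are correct bookkeeping that the paper leaves implicit.
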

\renewcommand*{\thecorollary}{\arabic{corollary}}
\setcounter{corollary}{0}
%%%%

Indeed, if such an $F$ exists, then one can define a composition mapping 
\[
f=L\circ F:\mathcal{P}_t \to \mathcal{P}_t
\] 
with  $L:(\mathbb{S}^d)^N\to \mathcal{P}_t$ being given by $L(x_1,\dots,x_N)= G_{x_1}+\dots+G_{x_N}$. Then,
\[
	\langle P,f(P)\rangle = \sum^N_{i=1}P(x_i(P))
\]
for each $P\in \mathcal{P}_t$. Thus, applying Theorem~\ref{basic} to the mapping $f$, the vector space $\mathcal{P}_t$, and the subset $\Omega$ defined by (\ref{Eq2}), we obtain that $f(Q)=0$ for some $Q\in \mathcal{P}_t$. Hence, by (\ref{Eq1}), the components of $F(Q) = (x_1(Q),\dots,x_N(Q))$ form a spherical $t$-design in $\mathbb{S}^d$ consisting of $N$ points.  

The proofs of \cite[Lemma1, Theorem 1]{ostd}  give the following key result on the existence of $f=L\circ F$ and the estimate of $\langle P,f(P)\rangle= \sum^N_{i=1}P(x_i(P))$, which concludes that the optimal order of a spherical $t$-design is $t^d$. 

%%%%
\begin{theorem}[\cite{ostd}]\label{thm2}
There exist two constants $C_d$ and $r_d$ depending only on $d$ for each $d\in \mathbb{N}$ and a continuous mapping $F: \mathcal{P}_t \to (\mathbb{S}^d)^N$ with $F(P)=(x_1(P),\dots,x_N(P)) $ such that when $N\geq C_dt^d$, we have
\begin{equation}
	\sum^N_{i=1}P(x_i(P)) > N\left(\frac{1}{6\sqrt{d}}\frac{r_d}{3t}-\frac{r_d}{18\sqrt{d}t}\right)=0,\quad \forall P\in \partial \Omega.
\end{equation}
\end{theorem}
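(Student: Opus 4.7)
The plan is to build the mapping $F$ by perturbing representative points of a fine area-regular partition of $\mathbb{S}^d$ in a direction adapted to the gradient of $P$. First, invoke the classical existence of an area-regular partition $\mathcal{R}=\{R_1,\dots,R_N\}$ of $\mathbb{S}^d$ with $N\ge C_dt^d$ and partition norm $\|\mathcal{R}\|\le r_d/(3t)$ for a sufficiently small absolute constant $r_d$; such partitions are standard (e.g.\ via the Feige--Schechtman or Bondarenko--Viazovska constructions). Fix, once and for all, a representative point $z_i\in R_i$ for each cell.

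Next, for each $P\in\mathcal{P}_t$ and each $z_i$, project the spherical gradient $\nabla P(z_i)$ onto the tangent space $T_{z_i}\mathbb{S}^d$, and pass it through a fixed smooth cut-off $\psi:[0,\infty)\to[0,1]$ that tapers the magnitude to $0$ at the origin to obtain a tangential vector $v_i(P)$ depending continuously on $P$. Define
\begin{equation*}
x_i(P) := \exp_{z_i}\!\Bigl(\tfrac{r_d}{3t}\,v_i(P)\Bigr),
\end{equation*}
where $\exp_{z_i}$ is the spherical exponential map. This yields a continuous map $F:\mathcal{P}_t\to(\mathbb{S}^d)^N$ with the normal perturbation bounded uniformly by $r_d/(3t)$.

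To prove the inequality, expand each $P(x_i(P))$ along its geodesic via first-order Taylor,
\begin{equation*}
P(x_i(P)) = P(z_i) + \tfrac{r_d}{3t}\,\langle \nabla P(z_i),v_i(P)\rangle + R_i, \qquad |R_i|\le C\Bigl(\tfrac{r_d}{t}\Bigr)^{2}\|\nabla^{2}P\|_{\infty},
\end{equation*}
and sum over $i$. The construction of $v_i$ guarantees $\langle \nabla P(z_i),v_i(P)\rangle\ge \tfrac{1}{6\sqrt d}\,|\nabla P(z_i)|$ outside a negligible sub-threshold region, so applying area-regularity ($\sum_i f(z_i)\approx N\int_{\mathbb{S}^d}f\,d\mu_d$ with error controlled by $\|\mathcal{R}\|\cdot\|\nabla f\|_{\infty}$), the defining identity $\int_{\mathbb{S}^d}|\nabla P|\,d\mu_d=1$ on $\partial\Omega$, and Bernstein's inequality on the sphere (which converts derivatives on $\mathcal{P}_t$ into $t$-factors, e.g.\ via the bound (\ref{nieq1})), one concludes that the main term contributes at least $N\cdot\tfrac{1}{6\sqrt d}\cdot\tfrac{r_d}{3t}$, whereas the zeroth-order quadrature error $|\sum_i P(z_i)|$ together with the second-order remainder $\sum_i|R_i|$ are both bounded in absolute value by $N\cdot\tfrac{r_d}{18\sqrt d\,t}$ once $r_d$ is chosen small enough. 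This yields exactly the stated lower bound, and the inequality is strict for $P\in\partial\Omega$ because the Bernstein and quadrature bounds are strict for a nonzero polynomial.

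The main obstacle is producing a mapping $F$ that is continuous on \emph{all} of $\mathcal{P}_t$, including those $P$ for which $\nabla P$ vanishes at some $z_i$, while still extracting a quantitatively sharp gradient alignment. The smooth cut-off $\psi$ is what guarantees continuity, but it inevitably costs a geometric factor---exactly the $1/(6\sqrt d)$ appearing in the statement---and matching this loss against the Bernstein-type quadrature error with enough slack to preserve strict positivity is the delicate bookkeeping at the heart of the argument in \cite{ostd}.
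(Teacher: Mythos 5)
Your overall architecture (area-regular partition, Marcinkiewicz--Zygmund-type inequalities for $|\nabla P|$, a gradient-aligned perturbation with a cut-off to preserve continuity) is in the spirit of \cite{ostd}, but you replace the key device of that proof --- a gradient \emph{flow} $\frac{d}{ds}w_i(s)=\nabla P(w_i(s))/h_\varepsilon(|\nabla P(w_i(s))|)$ run for time $r_d/(3t)$, analyzed via Newton--Leibniz --- with a single exponential-map step and a first-order Taylor expansion. This substitution creates a genuine gap: your remainder is bounded by $C(r_d/t)^2\Vert\nabla^2P\Vert_\infty$, and on $\partial\Omega$ you only control $\int_{\mathbb{S}^d}|\nabla P|\,d\mu_d=1$. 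Converting that $L^1$ gradient bound into a sup-norm bound on the Hessian costs a Nikolskii factor of order $t^d$ plus a Bernstein factor of order $t$, so $\Vert\nabla^2P\Vert_\infty\lesssim t^{d+1}$ and the summed remainder is of order $N r_d^2 t^{d-1}$, which swamps the main gain $N r_d/(18\sqrt{d}\,t)$ for every $d\ge1$ and cannot be rescued by shrinking the constant $r_d$ (which must not depend on $t$). The ODE formulation in the paper avoids the Hessian entirely: differentiating along the flow gives $\frac{d}{ds}\bigl[\frac{1}{N}\sum_iP(w_i(s))\bigr]=\frac{1}{N}\sum_i|\nabla P(w_i(s))|^2/h_\varepsilon(|\nabla P(w_i(s))|)\ge\frac{1}{N}\sum_i|\nabla P(w_i(s))|-\varepsilon$, which is bounded below at \emph{every} time $s$ by the gradient MZ inequality applied to the current positions, so only first derivatives ever appear. (A single-step argument could perhaps be salvaged by bounding the remainder through $\int|\nabla^2P|\,d\mu_d\lesssim t\int|\nabla P|\,d\mu_d$ and an MZ inequality for $|\nabla^2P|$, but that is not what you wrote, and it would require additional lemmas not present in your sketch.)

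There is a second, quantitative error: you take the partition norm $\Vert\mathcal{R}\Vert\le r_d/(3t)$, the \emph{same} scale as the perturbation length. The zeroth-order quadrature error is then only controlled by $3\sqrt{d}\,\Vert\mathcal{R}\Vert\int|\nabla P|\,d\mu_d=\sqrt{d}\,r_d/t$, which exceeds your claimed bound $r_d/(18\sqrt{d}\,t)$ by a factor of $18d$ and in particular exceeds the main-term gain $\frac{1}{6\sqrt d}\cdot\frac{r_d}{3t}$. The paper must (and does) take the partition substantially finer than the flow time --- $\Vert\mathcal{R}\Vert<r_d/(108dt)$ versus travel distance $r_d/(3t)$ --- precisely so that the starting-point error $3\sqrt{d}\,\Vert\mathcal{R}\Vert$ is strictly dominated by the drift accumulated over the flow. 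As written, your bookkeeping does not close, even before the Hessian issue.
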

%%%%

However, to prove the existence of nested spherical designs,  we need a theorem that is slightly different from Theorem \ref{thm2}. In Theorem \ref{thm2}, the constant $18$ of $\frac{r_d}{18\sqrt{d}t}$ is directly related to the constant $C_d$ and one can see from the proof of in \cite{ostd} that changing $C_d$ would not affect the first term $\frac{1}{6\sqrt{d}}\frac{r_d}{3t}$. Thus, we can assume a larger constant $C_{1,d}:=2C_d$, which is still a constant that is only related to $d$, and as a result, the constant $18$ becomes $36$. We have the following modified theorem.

%%%%
\renewcommand*{\thetheorem}{\arabic{theorem}}
\setcounter{theorem}{0}
\begin{theorem}\label{r1}
There exist two constants $C_{1,d}$ and $C_{2,d}$ depending only on $d$ for each $d\in \mathbb{N}$ and a continuous mapping $F: \mathcal{P}_t \to (\mathbb{S}^d)^N$ with  $F(P)=(x_1(P),\dots,x_N(P)) $ such that when $N\geq C_{1,d}t^d$, we have
\begin{equation}
	\sum^N_{i=1}P(x_i(P)) > NC_{2,d}t^{-1}> 0,\quad \forall P\in \partial \Omega.
\end{equation}
\end{theorem}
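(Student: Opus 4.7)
The plan is to revisit the proof of Theorem \ref{thm2} essentially verbatim and merely tighten the book-keeping of dimensional constants. In \cite{ostd}, the mapping $F$ is built from an area-regular partition $\mathcal{R}=\{R_1,\dots,R_N\}$ of $\mathbb{S}^d$ whose partition norm satisfies $\Vert\mathcal{R}\Vert\le c_d N^{-1/d}$, and $x_i(P)$ is a continuous selection inside $R_i$. Viewing $\sum_{i=1}^N P(x_i(P))$ as a Riemann-type sum approximating $N\int_{\mathbb{S}^d}P\,d\mu_d=0$ (the integral vanishes because $P\in\mathcal{P}_t$), the argument produces a two-term lower bound: a \emph{main term} of size $\tfrac{1}{6\sqrt{d}}\tfrac{r_d}{3t}=\tfrac{r_d}{18\sqrt{d}\,t}$ arising from the normalization $\int_{\mathbb{S}^d}|\nabla P|\,d\mu_d=1$ on $\partial\Omega$, and an \emph{error term} controlled by $\Vert\mathcal{R}\Vert$ and the size of $\nabla P$ on each cell. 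At the threshold $N=C_d t^d$ these two quantities are tuned to cancel, yielding the borderline $=0$ in Theorem \ref{thm2}. I will reuse exactly the same $F$ but work strictly above the threshold, where the cancellation becomes a strict inequality.

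First I would trace through the proof of \cite[Theorem 1]{ostd} to verify the claim stated by the authors in the discussion preceding Theorem \ref{r1}: the main term depends only on $P\in\partial\Omega$ and on purely dimensional constants, not on $N$, whereas the error term depends on $N$ only through $\Vert\mathcal{R}\Vert\sim N^{-1/d}$ and therefore shrinks monotonically as $N$ grows. Enlarging $C_d$ thus affects only the error term. Setting $C_{1,d}:=2C_d$ (or, if the cell-diameter scaling requires it, $C_{1,d}:=2^d C_d$), the cell estimate from \cite{ostd} pushes the constant $18$ in the error term up to $36$ (or a larger dimensional constant), so that for every $P\in\partial\Omega$,
\[
\sum_{i=1}^N P(x_i(P)) > N\left(\frac{r_d}{18\sqrt{d}\,t}-\frac{r_d}{36\sqrt{d}\,t}\right)=N\cdot\frac{r_d}{36\sqrt{d}\,t}.
\]
This gives the conclusion with $C_{2,d}:=\tfrac{r_d}{36\sqrt{d}}$ (adjusted accordingly if $C_{1,d}$ is taken larger).

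The main obstacle is expository rather than analytic: one must read \cite{ostd} with enough care to confirm that $C_d$ enters \emph{only} through the partition-norm/error step, so that enlarging it neither perturbs the main term nor breaks the continuity of $F$ (the selection inside each $R_i$ continues to be continuous when $R_i$ is replaced by a finer cell). Once this separation of roles is verified, no new analytic tool is needed—no sharper Bernstein inequality, no new fixed-point argument, and no re-derivation of the $L^1$-lower bound on $\partial\Omega$. In fact the very same argument works for any multiplicative enlargement of $C_d$ beyond $2$, at the mild cost of a correspondingly smaller $C_{2,d}$, which is all that is required in Theorem \ref{r1}.
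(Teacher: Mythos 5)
Your proposal is correct and follows essentially the same route as the paper's own proof: it reruns the construction of \cite{ostd} verbatim, enlarging the threshold constant so that the partition norm bound tightens from $r_d/(54dt)$ to $r_d/(108dt)$ (which, as you correctly anticipate, costs a factor $2^d$ rather than $2$ in $C_{1,d}$ since $\Vert\mathcal{R}\Vert\sim B_dN^{-1/d}$), halving the error term to $r_d/(36\sqrt{d}\,t)$ while leaving the main term $r_d/(18\sqrt{d}\,t)$ untouched. This yields exactly the paper's conclusion with $C_{2,d}=r_d/(36\sqrt{d})$.
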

%%%%

The proof of Theorem \ref{r1} is almost a verbatim proof of \cite[Theorem 1]{ostd} with only some modifications of constants. For completeness, we include it in the Appendix. Using Theorem~\ref{r1}, we can prove the following result for a spherical $t$-design containing an arbitrarily prescribed point set.

%%%%
\begin{theorem}\label{thm3}
Let $t\in\mathbb{N}$ and $\ptset{Y}_M=\{y_1,\dots,y_M\}\subset\mathbb{S}^d$ be a set of $M$ points on the sphere. Then, for sufficiently large $N$, there exists a point set $\ptset{X}_N=\{x_1,\dots,x_N\}\subset \mathbb{S}^d$ such that the union $\ptset{X}_N\cup \ptset{Y}_M$ is a spherical $t$-design.
\end{theorem}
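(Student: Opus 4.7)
The plan is to adapt the Brouwer-degree strategy behind Corollary~A to the present setting by shifting the target map by a constant polynomial that encodes the prescribed points $\mathcal{Y}_M$. Specifically, let $F:\mathcal{P}_t\to(\mathbb{S}^d)^N$ be the continuous map from Theorem~\ref{r1} and define
\[
	f(P) := \sum_{i=1}^N G_{x_i(P)} + \sum_{j=1}^M G_{y_j},
\]
which is a continuous self-map of $\mathcal{P}_t$. By the reproducing property (\ref{Eq1}), the condition $f(Q)=0$ for some $Q\in\mathcal{P}_t$ is equivalent to $\mathcal{X}_N\cup\mathcal{Y}_M$ being a spherical $t$-design, where $\mathcal{X}_N=\{x_1(Q),\dots,x_N(Q)\}$. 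The goal is therefore to show that $f$ has a zero inside the open bounded set $\Omega$ defined in (\ref{Eq2}), and Theorem~\ref{basic} reduces this to verifying $\langle P, f(P)\rangle>0$ on $\partial\Omega$.

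By (\ref{Eq1}) we have
\[
	\langle P, f(P)\rangle \;=\; \sum_{i=1}^N P(x_i(P)) \;+\; \sum_{j=1}^M P(y_j).
\]
Theorem~\ref{r1} provides a lower bound $\sum_{i=1}^N P(x_i(P)) > NC_{2,d}t^{-1}$ for all $P\in\partial\Omega$ whenever $N\ge C_{1,d}t^d$. The term $\sum_{j=1}^M P(y_j)$ depends on the fixed set $\mathcal{Y}_M$ and on $P$; to control it, I would exploit that $\int_{\mathbb{S}^d}|\nabla P|\,d\mu_d$ is a norm on the finite-dimensional space $\mathcal{P}_t$, so $\partial\Omega$ is the unit sphere of that norm and hence is compact. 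Since $P\mapsto\|P\|_\infty$ is continuous,
\[
	K_{t,d} := \sup_{P\in\partial\Omega}\|P\|_\infty < \infty,
\]
giving $\bigl|\sum_{j=1}^M P(y_j)\bigr|\le M K_{t,d}$ uniformly on $\partial\Omega$.

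Combining these two bounds, any $N$ satisfying
\[
	N \ge C_{1,d}t^d \quad\text{and}\quad NC_{2,d}t^{-1} > MK_{t,d}
\]
forces $\langle P, f(P)\rangle>0$ on $\partial\Omega$. Theorem~\ref{basic} then supplies $Q\in\Omega$ with $f(Q)=0$, i.e.\ $\sum_i G_{x_i(Q)} + \sum_j G_{y_j}=0$, so $\{x_i(Q)\}\cup\mathcal{Y}_M$ is a spherical $t$-design.

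The only genuine obstacle is the uniform control of the offset $\sum_{j=1}^M P(y_j)$ across $P\in\partial\Omega$; once one recognizes that $\partial\Omega$ is compact, a pure finite-dimensional compactness argument suffices for the existence claim, and no explicit polynomial inequality is required at this stage. (Quantitative control of $K_{t,d}$ via Nikolskii/Bernstein-type inequalities on $\mathcal{P}_t$ is exactly what the subsequent Section~\ref{sec:estimates} will need to turn this qualitative existence into concrete bounds on $N+M$.)
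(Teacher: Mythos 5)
Your proposal is correct and follows essentially the same route as the paper: shift the map of Theorem~\ref{r1} by the fixed polynomial $\sum_{j}G_{y_j}$, lower-bound $\sum_i P(x_i(P))$ by $NC_{2,d}t^{-1}$, and use compactness of $\partial\Omega$ in the finite-dimensional space $\mathcal{P}_t$ to bound the offset term uniformly so that sufficiently large $N$ forces $\langle P,f(P)\rangle>0$ on $\partial\Omega$. The only cosmetic difference is that you control the offset via $\sup_{P\in\partial\Omega}\Vert P\Vert_\infty$ whereas the paper takes $\inf_{P\in\partial\Omega}\langle\sum_j G_{y_j},P\rangle$ directly; both are the same compactness argument.
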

%%%%

%%%%
\begin{proof}
By Corollary~\ref{cor:A}, it is sufficient to consider the existence of a continuous map $F: \mathcal{P}_t \to (\mathbb{S}^d)^N$ with $F(P)=(x_1(P),\dots,x_N(P))$ such that its composition with a continuous mapping $L':(\mathbb{S}^d)^N \to \mathcal{P}_t$
\[
	(x_1,\dots,x_N) \overset{L'}{\to} G_{x_1}+\dots +G_{x_N} + G_{y_1}+\dots+G_{y_M},
\]
i.e. $g:=L'\circ F:\mathcal{P}_t \to \mathcal{P}_t$ satisfies
\begin{gather*}
	\sum^N_{i=1}P(x_i(P)) + \sum^M_{j=1}P(y_j) = \langle P,g(P)\rangle = \left\langle \sum^N_{i=1}G_{x_i(P)},P\right \rangle +\left \langle \sum^{M}_{j=1}G_{y_i},P\right \rangle >0, 
\end{gather*}
for all $P\in\partial \Omega$ with $\Omega$ being defined as in \eqref{Eq2}.

From Theorem~\ref{r1}, for $N\geq C_{1,d}t^d$, there exists a continuous mapping $F: \mathcal{P}_t \to (\mathbb{S}^d)^N$ with $F(P)=(x_1(P),\ldots,x_N(P))$ such that
\[
	\sum^N_{i=1}P(x_i(P)) = \left\langle \sum^N_{i=1}G_{x_i(P)},P\right \rangle > NC_{2,d}t^{-1}>0,\quad \forall P\in \partial \Omega.
\]
Since $\partial\Omega$ is a compact subset of $\mathcal{P}_t$, the quantity
\[
	\underset{P\in\partial\Omega}{\inf} \left \langle \sum^{M}_{j=1}G_{y_i},P\right\rangle
\]
is finite and we denote it as $m$. Thus, when $N$ is sufficiently large, we have 
\[
\left\langle \sum^N_{i=1}G_{x_i(P)},P\right \rangle +\left \langle \sum^{M}_{j=1}G_{y_i},P\right \rangle > NC_{2,d}t^{-1} + m >0,
\]
for all $P\in\partial \Omega$, which completes the proof. 
\end{proof}
%%%%

The existence of nested spherical designs, as shown by the following corollary, follows directly from Theorem~\ref{thm3}.

%%%%
\begin{corollary}\label{cor:nested}
Let $t_1, t\in\mathbb{N}$  be two integers such that $t_1 <  t$ and $\ptset{Y}_M=\{y_1,\allowbreak \dots,y_M\}\subset \mathbb{S}^d$ be a spherical $t_1$-design. Then, for sufficiently large $N$, there exists a point set $\ptset{X}_N=\{x_1,\dots,x_N\}\subset \mathbb{S}^d$ such that $\ptset{X}_N\cup\ptset{Y}_M$ is a spherical $t$-design.
\end{corollary}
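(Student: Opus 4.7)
The plan is to obtain this corollary as an immediate specialization of Theorem~\ref{thm3}, since the hypothesis of that theorem is simply that $\mathcal{Y}_M$ is an arbitrary finite point set on $\mathbb{S}^d$, with no structural requirement on it. A spherical $t_1$-design is a fortiori such a point set, so the existence claim in the corollary is already covered, and the only thing left to extract is the nested structure.

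Concretely, I would proceed as follows. First, let $\mathcal{Y}_M$ be the prescribed spherical $t_1$-design and fix the degree $t > t_1$. Applying Theorem~\ref{thm3} to this $\mathcal{Y}_M$ and to the polynomial space $\mathcal{P}_t$ directly yields, for all sufficiently large $N$, a point set $\mathcal{X}_N = \{x_1,\dots,x_N\} \subset \mathbb{S}^d$ such that $\mathcal{X}_N \cup \mathcal{Y}_M$ integrates exactly every $P \in \mathcal{P}_t$, i.e.\ $\mathcal{X}_N \cup \mathcal{Y}_M$ is a spherical $t$-design. Then I would observe that, by construction, $\mathcal{Y}_M \subset \mathcal{X}_N \cup \mathcal{Y}_M$ (as multi-sets), so the pair $(\mathcal{Y}_M,\, \mathcal{X}_N\cup\mathcal{Y}_M)$ realizes the nested structure $t_1$-design $\subset$ $t$-design described in the introduction.

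There is essentially no obstacle in this step. The entire difficulty was already absorbed into Theorem~\ref{thm3}, which in turn rests on Theorem~\ref{r1} and the Brouwer-degree argument. The only minor point worth mentioning in the write-up is that the fact that $\mathcal{Y}_M$ happens to be a $t_1$-design is \emph{not used} in producing $\mathcal{X}_N$; it enters only when we interpret the resulting $t$-design as a nested extension. Thus the corollary is really a statement about how Theorem~\ref{thm3} specializes, rather than a new existence result, and the proof reduces to a single invocation of that theorem together with the observation that $\mathcal{Y}_M$ sits inside the constructed union.
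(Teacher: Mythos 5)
Your proposal is correct and matches the paper exactly: the paper also obtains this corollary as an immediate specialization of Theorem~\ref{thm3}, noting that a spherical $t_1$-design is in particular an arbitrary point set, so the existence of the extension $\ptset{X}_N$ follows directly and the nesting $\ptset{Y}_M\subset\ptset{X}_N\cup\ptset{Y}_M$ holds by construction. Your remark that the $t_1$-design hypothesis is not used in producing $\ptset{X}_N$ is also consistent with the paper's treatment.
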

%%%%

%%%%%%%%%%%%%%%%%%%%%%%%%%%%%%
%%%%%%%%%%%%%%%%%%%%%%%%%%%%%%
%%%%%%%%%%%%%%%%%%%%%%%%%%%%%%
%%%%%%%%%%%%%%%%%%%%%%%%%%%%%%
\section{Estimates of nested spherical designs}\label{sec:estimates}

As indicated in the previous section, to estimate the number of required points for extending a given point set $\mathcal{Y}_M:= \{y_1,\dots,y_M\}$ to a spherical $t$-design, we need to estimate the quantity 
$
	\inf_{P\in\partial \Omega}\left \langle \sum^{M}_{j=1}G_{y_i},P\right\rangle
$
from below. 
Since 
\[
	\left |\left \langle \sum^{M}_{j=1}G_{y_i},P\right\rangle \right| \leq \omega_d^{-1}\left\Vert \sum^{M}_{j=1}G_{y_i}\right\Vert_2 \Vert P\Vert_2, 
\]
it can be reduced to the estimates of the $L^2$-norms of $P \in \partial \Omega$ and $\sum^{M}_{j=1}G_{y_i}$ with arbitrary $\ptset{Y}_M$. 

First, for $\|P\|_2$, we have the following estimate.

%%%%
\begin{lemma}\label{p_norm}
For any $P\in\partial \Omega$, it holds that
\[
	\Vert P \Vert_2 \leq \sqrt{\frac{d+1}{d}\omega_dD(t+1,d+1)}.
\]
\end{lemma}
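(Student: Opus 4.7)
The plan is to combine a Poincar\'e-type inequality on the sphere (coming from the spectral gap of the Laplace--Beltrami operator restricted to $\mathcal{P}_t$) with a Nikolskii/Bernstein-type inverse inequality (coming from the reproducing kernel) and then bootstrap via the $L^1$--$L^2$--$L^\infty$ chain. The chain of implications I aim for is $\|P\|_2^2\lesssim \|\nabla P\|_2^2\lesssim \|\nabla P\|_\infty \cdot\|\nabla P\|_1$, and then use the Nikolskii bound to trade $\|\nabla P\|_\infty$ for $\|\nabla P\|_2$, together with $\|\nabla P\|_1=\omega_d$ (which is just the $\partial\Omega$ condition).

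First, expand $P=\sum_{\ell=1}^t\sum_{k=1}^{D(\ell,d)}c_{\ell,k}Y_{\ell,k}$ in the $(\cdot,\cdot)$-orthonormal basis, so $\|P\|_2^2=\sum c_{\ell,k}^2$. Green's identity on $\mathbb{S}^d$ together with $-\Delta_{\mathbb{S}^d}Y_{\ell,k}=\ell(\ell+d-1)Y_{\ell,k}$ gives
\[
\int_{\mathbb{S}^d}|\nabla P|^2\,d\lambda_d=\sum_{\ell,k}\ell(\ell+d-1)\,c_{\ell,k}^2\ge d\sum_{\ell,k}c_{\ell,k}^2=d\,\|P\|_2^2,
\]
since $\ell(\ell+d-1)\ge d$ for $\ell\ge 1$. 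This is the spectral Poincar\'e step and it yields $\|P\|_2^2\le \tfrac1d\|\nabla P\|_2^2$.

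Second, I would prove a Nikolskii-type inverse inequality for $\nabla P$. A short computation with the radial-$0$ extension shows that, on $\mathbb{S}^d$, $\nabla_i\bigl(Y_{\ell,k}(x/|x|)\bigr)=\partial_iY^H_{\ell,k}(x)-\ell\xi_iY^H_{\ell,k}(x)$, so each component $\nabla_iP$ is a polynomial on $\mathbb{S}^d$ of degree at most $t+1$. Applying the Cauchy--Schwarz argument that produced \eqref{nieq1}, but now with the full reproducing kernel for the polynomial space of degree $\le t+1$ (whose diagonal value is $\sum_{\ell=0}^{t+1}D(\ell,d)/\omega_d=D(t+1,d+1)/\omega_d$ by the addition formula), gives
\[
\|\nabla_iP\|_\infty\le \sqrt{D(t+1,d+1)/\omega_d}\;\|\nabla_iP\|_2.
\]
Using $|\nabla P(x)|^2=\sum_{i=1}^{d+1}|\nabla_iP(x)|^2\le(d+1)\max_i|\nabla_iP(x)|^2$ and $\max_i\|\nabla_iP\|_2\le \|\nabla P\|_2$, one obtains
\[
\|\nabla P\|_\infty\le \sqrt{(d+1)D(t+1,d+1)/\omega_d}\,\|\nabla P\|_2.
\]

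Third, the bootstrap: since $\|\nabla P\|_2^2\le \|\nabla P\|_\infty\cdot\|\nabla P\|_1$ and $\|\nabla P\|_1=\omega_d\cdot\int_{\mathbb{S}^d}|\nabla P|\,d\mu_d=\omega_d$ for $P\in\partial\Omega$, inserting the Nikolskii bound yields $\|\nabla P\|_2\le\sqrt{(d+1)\omega_d D(t+1,d+1)}$, and therefore
\[
\|P\|_2^2\le \frac{1}{d}\|\nabla P\|_2^2\le \frac{d+1}{d}\,\omega_d\,D(t+1,d+1),
\]
which is the desired bound. The main obstacle I anticipate is not any of the inequalities themselves but the careful verification that $\nabla_iP$ genuinely lives in the polynomial space of degree at most $t+1$ on $\mathbb{S}^d$, since this is what justifies invoking the reproducing kernel of that space; once that degree-$(t+1)$ bookkeeping is in place, the rest is a short assembly of the three steps.
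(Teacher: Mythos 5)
Your proof is correct and follows essentially the same route as the paper's: a Nikolskii-type bound via the addition formula for the degree-$(t+1)$ polynomial components of $\nabla P$, combined with the interpolation inequality $\Vert\cdot\Vert_2^2\le\Vert\cdot\Vert_1\Vert\cdot\Vert_\infty$ and the Poincar\'e inequality $\Vert P\Vert_2^2\le d^{-1}\Vert\nabla P\Vert_2^2$, arriving at the identical constant $\sqrt{\tfrac{d+1}{d}\omega_d D(t+1,d+1)}$. The only differences are organizational: you work with the gradient magnitude and the full reproducing kernel of the degree-$(t+1)$ space instead of arguing component-by-component with the per-degree decomposition and a Cauchy--Schwarz over degrees, and you derive the Poincar\'e step spectrally rather than citing it, none of which changes the substance.
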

%%%%

%%%%
\begin{proof}
For an $\ell$-degree spherical harmonic $Y_{\ell,k}$, the components of $\nabla Y_{\ell,k}$ are linear combinations of spherical harmonics of degree $\ell+1$ and $\ell-1$ (\cite[Chapter 12, (12.3.2)]{freeden1998constructive}). Hence, for a $t'$-degree polynomial $P\in\mathcal{P}_t$, we have $\nabla_i P = \sum_{j=0}^{t'+1}Q_{i,j}$, where $Q_{i,j}$ is either zero or a spherical harmonic of {degree $j$}. Note that by the orthonormality of the spherical harmonics, we also have $\Vert \nabla_i P\Vert_2^2= \sum_{j=0}^{t'+1}\Vert Q_{i,j}\Vert_2^2$.

Applying the interpolation inequality and (\ref{nieq1}), we obtain
\begin{align*}
	\Vert \nabla_i P\Vert_2 &\leq  \Vert \nabla_i P\Vert_1^{1/2}\Vert \nabla_i P \Vert_\infty^{1/2}
\leq  \Vert \nabla_i P\Vert_1^{1/2}\left(\sum_{j=0}^{t'+1}\Vert Q_{i,j} \Vert_\infty\right)^{1/2}\\
&\leq  \Vert \nabla_i P\Vert_1^{1/2}\left(\sum_{j=0}^{t'+1}\Vert Q_{i,j} \Vert_2\left(\frac{D(j,d)}{\omega_d}\right)^{1/2} \right)^{1/2}\\
&\leq  \Vert \nabla_i P\Vert_1^{1/2}\left(\sum_{j=0}^{t'+1}\Vert Q_{i,j}\Vert^2_2\right)^{1/4}\left(\sum_{j=0}^{t'+1}\frac{D(j,d)}{\omega_d}\right)^{1/4}\\
&=\Vert \nabla_i P\Vert_1^{1/2}\Vert \nabla_i P\Vert_2^{1/2}\left(\frac{D(t'+1,d+1)}{\omega_d}\right)^{1/4}\\
&\leq\Vert \nabla_i P\Vert_1^{1/2} \Vert \nabla_i P\Vert_2^{1/2}\left(\frac{D(t+1,d+1)}{\omega_d}\right)^{1/4}.
\end{align*}
Thus, we deduce that
\[
	\sum_{i=1}^{d+1}\Vert \nabla_i P \Vert_2 \leq \left(\frac{D(t+1,d+1)}{\omega_d}\right)^{\frac{1}{2}}\sum_{i=1}^{d+1}\Vert \nabla_i P\Vert_1.
\]
Note that 
\begin{align}\label{nieq2}
	\omega_d^{-1}\sum_{i=1}^{d+1}\Vert \nabla_i P\Vert_1
	 = \int_{\mathbb{S}^d}\sum_{i=1}^{d+1} |\nabla_i P(x)| d{\mu_d}(x)
	\leq (d+1)^{1/2} \int_{\mathbb{S}^d} |\nabla P(x)|d{\mu_d}(x).
\end{align}
Combined with (\ref{nieq2}) and the Poincar\'e inequality on $\mathbb{S}^d$ \cite{li2016poincar}, that is, $\Vert P \Vert_2^2 \leq d^{-1} \sum_{i=1}^{d+1}\Vert \nabla_i P \Vert_2^2$, we obtain for all $P\in \partial\Omega$, 
\begin{align*}
	\Vert P \Vert_2 &\leq d^{-1/2} \left(\sum_{i=1}^{d+1}\Vert \nabla_i P \Vert_2^2\right)^{1/2} \leq d^{-1/2} \sum_{i=1}^{d+1}\Vert \nabla_i P \Vert_2\\
&\leq d^{-1/2}\left(\frac{D(t+1,d+1)}{\omega_d}\right)^{\frac{1}{2}}\sum_{i=1}^{d+1}\Vert \nabla_i P\Vert_1\\
&\leq d^{-1/2}\left(\frac{D(t+1,d+1)}{\omega_d}\right)^{\frac{1}{2}}\omega_d(d+1)^{1/2} \int_{\mathbb{S}^d} |\nabla P(x)|d{\mu_d}(x) \\
&= \sqrt{\frac{d+1}{d}\omega_dD(t+1,d+1)}, 
\end{align*}
which completes the proof.
\end{proof}
%%%%

%%%%
\begin{remark}
{\rm We can regard $\sqrt{\frac{d+1}{d}\omega_dD(t+1,d+1)}$ as an estimate of one of the equivalence constants between the norm $\Vert P\Vert_2$ and the norm $\int_{\mathbb{S}^d} |\nabla P(x)|d\mu_d(x)$ on $\mathcal{P}_t$.}
\end{remark}
%%%%

Next, we estimate $\left\Vert \sum^{M}_{j=1}G_{y_i}\right\Vert_2$. Recall that $D_t = D(t,d+1)-1$.
%%%%
\begin{lemma}\label{g_norm} 
For $\ptset{Y}_M=\{y_1,\ldots, y_M\}\subset \mathbb{S}^d$ and $\{G_{y_i}\setsep y_i\in\ptset{Y}_M\} \subset\mathcal{P}_t$, it holds that
\begin{equation}\label{g1}
\left\Vert \sum^{M}_{j=1}G_{y_i}\right\Vert_2 \leq M\sqrt{\omega_d D_t }.
\end{equation}
Moreover, if $\ptset{Y}_M$ is a spherical $t_1$-design with $t_1 < t$, then
\begin{equation}\label{g2}
	\left\Vert \sum^{M}_{j=1}G_{y_i}\right\Vert_2 \leq M\sqrt{\omega_d(D_t-D_{t_1})}.
\end{equation}
\end{lemma}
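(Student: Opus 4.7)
The plan is to work in the orthonormal basis of spherical harmonics and exploit the explicit reproducing kernel formula (\ref{rpk}). First, I would write
\[
\sum_{j=1}^M G_{y_j}(\cdot) \;=\; \omega_d \sum_{\ell=1}^{t}\sum_{k=1}^{D(\ell,d)} S_{\ell,k}\, Y_{\ell,k}(\cdot), \qquad S_{\ell,k} := \sum_{j=1}^M Y_{\ell,k}(y_j).
\]
Since the $Y_{\ell,k}$ are orthonormal under $(\cdot,\cdot) = \omega_d\langle\cdot,\cdot\rangle$, Parseval immediately gives
\[
\Bigl\Vert\sum_{j=1}^M G_{y_j}\Bigr\Vert_2^2 \;=\; \omega_d^2 \sum_{\ell=1}^{t}\sum_{k=1}^{D(\ell,d)} S_{\ell,k}^2.
\]
This identity is the backbone; everything else is controlling the sum on the right.

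Next, to prove \eqref{g1}, I would apply the Cauchy–Schwarz inequality to each coefficient, $S_{\ell,k}^2 \le M\sum_{j=1}^M Y_{\ell,k}(y_j)^2$, and then interchange sums so that the inner sum over $k$ is handled by the addition formula (\ref{at}) at $x=y$, namely $\sum_{k=1}^{D(\ell,d)} Y_{\ell,k}(y_j)^2 = D(\ell,d)/\omega_d$. Summing this against the index $\ell$ from $1$ to $t$ produces $D_t/\omega_d$, and summing over the $M$ points $y_j$ contributes a further factor of $M$. Putting these together yields $\sum_{\ell,k} S_{\ell,k}^2 \le M^2 D_t/\omega_d$, and substituting back gives the claimed bound $M\sqrt{\omega_d D_t}$. (Equivalently, one could just use the triangle inequality together with $\|G_{y}\|_2^2 = \omega_d\, k_t(y,y) = \omega_d D_t$, but the Cauchy–Schwarz route adapts cleanly to the second part.)

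For \eqref{g2}, the key observation is that if $\mathcal{Y}_M$ is a spherical $t_1$-design, then for every $\ell$ with $1\le\ell\le t_1$ and every $k$, the function $Y_{\ell,k}\in\mathcal{P}_{t_1}$ has zero mean, so the design property forces $\sum_{j=1}^M Y_{\ell,k}(y_j)=0$, i.e.\ $S_{\ell,k}=0$. Therefore the outer sum in the identity above collapses to $\sum_{\ell=t_1+1}^{t}\sum_{k=1}^{D(\ell,d)} S_{\ell,k}^2$, and repeating the Cauchy–Schwarz argument of the previous paragraph on this truncated range replaces $D_t$ by $\sum_{\ell=t_1+1}^{t} D(\ell,d) = D_t-D_{t_1}$, yielding $M\sqrt{\omega_d(D_t-D_{t_1})}$.

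There is essentially no hard step here: the only thing one must be careful about is bookkeeping of the normalizing factor $\omega_d$, because the inner product $(\cdot,\cdot)$ under which the $Y_{\ell,k}$ are orthonormal differs from $\langle\cdot,\cdot\rangle$ by exactly this factor, and the reproducing kernel formula (\ref{rpk}) carries an extra $\omega_d$ in front. Once those factors are tracked consistently, both inequalities drop out of the same two-line computation.
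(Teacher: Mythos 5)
Your proposal is correct and follows essentially the same route as the paper: the Parseval identity in the spherical-harmonic basis, the Cauchy--Schwarz bound on the coefficients $\sum_{j}Y_{\ell,k}(y_j)$ combined with the addition formula, and the vanishing of the coefficients for $\ell\le t_1$ in the $t_1$-design case (which the paper states implicitly when reducing to the sum over $\ell\ge t_1+1$). The $\omega_d$ bookkeeping is also handled correctly, so nothing needs to change.
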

%%%%

%%%%
\begin{proof}
By (\ref{rpk}), the square of $L^2$-norm of $\sum^{M}_{j=1}G_{y_i}$ is 
\begin{equation}\label{norm1}
	\left\|\sum^{M}_{j=1}G_{y_i}\right \|_2^2 =\omega_d^2\sum_{\ell=1}^t \sum_{k=1}^{D(\ell,d)}\left(\sum_{j=1}^MY_{\ell,k}(y_j)\right)^2.
\end{equation}
When $\mathcal{Y}_M$ happens to be a spherical $t_1$-design with $t \ge t_1$, it  reduces to
\begin{equation}\label{norm2}
	 \left\|\sum^{M}_{j=1}G_{y_i}\right\|_2^2 = \omega_d^2\sum_{\ell=t_1+1}^t \sum_{k=1}^{D(\ell,d)}\left(\sum_{j=1}^MY_{\ell,k}(y_j)\right)^2.
\end{equation}
Now, by the Cauchy–Schwartz inequality, we have
\begin{equation}\label{ieq1}
\begin{aligned}
	\sum_{\ell=1}^t \sum_{k=1}^{D(\ell,d)}\left(\sum_{j=1}^MY_{\ell,k}(y_j)\right)^2 
	& \leq M \sum_{j=1}^M\sum_{l=1}^t\sum_{k=1}^{D(\ell,d)} Y^2_{\ell,k}(y_j)
	\\&= M^2 \frac{\sum_{\ell=1}^t D(\ell,d)}{\omega_d} 
	\\&= M^2 \frac{D_t}{\omega_d}.
	\end{aligned}
\end{equation}
Similarly, we have
\begin{gather}\label{ieq2}
	\sum_{\ell=t_1+1}^t \sum_{k=1}^{D(\ell,d)}\left(\sum_{j=1}^MY_{\ell,k}(y_j)\right)^2 \leq M^2\frac{\sum_{\ell=t_1+1}^t D(\ell,d)}{\omega_d} = M^2 \frac{D_t-D_{t_1}}{\omega_d}.
\end{gather}
Therefore, the inequalities \eqref{g1} and \eqref{g2} hold. 
\end{proof}
%%%%

\begin{remark}
{\rm
The estimate (\ref{ieq1}) is essentially the upper bound of the quantity $A_{L, N}$ in \cite[Theorem 3]{sloan2009variational}, where $L, N$ correspond to our $t, M$, respectively, and the upper bound differs by a constant $M^2$. In fact $A_{t,M}(\ptset{Y}_M) =\left\|\frac{1}{M}\sum^{M}_{j=1}G_{y_i}\right \|_2^2$.
}
\end{remark}

By Lemmas \ref{p_norm} and \ref{g_norm}, we have the following estimate for $\inf_{P\in\partial \Omega}\left \langle \sum^{M}_{j=1}G_{y_i},P\right\rangle$.

%%%%
\begin{lemma}\label{Gp_norm}
For $\ptset{Y}_M=\{y_1,\ldots, y_M\}\subset \mathbb{S}^d$ and $\{G_{y_i}\setsep y_i\in\ptset{Y}_M\} \subset\mathcal{P}_t$, it holds that
\[
\inf_{P\in\partial \Omega}\left \langle \sum^{M}_{j=1}G_{y_i},P\right\rangle
\ge -M\sqrt{\frac{d+1}{d}D(t+1,d+1)D_t}.
\]
Moreover, if $\ptset{Y}_M$ is a spherical $t_1$-design with $t_1 <  t$, then,
\[
\inf_{P\in\partial \Omega}\left \langle \sum^{M}_{j=1}G_{y_i},P\right\rangle
\ge -M\sqrt{\frac{d+1}{d}D(t+1,d+1)(D_t-D_{t_1})}.
\]
\end{lemma}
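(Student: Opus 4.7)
The plan is to combine the two preceding lemmas via a straightforward Cauchy--Schwarz argument, following the reduction already sketched in the opening paragraph of Section~\ref{sec:estimates}. First I would recall the relation $(P,Q)=\omega_d\langle P,Q\rangle$ between the normalized and unnormalized inner products on $\mathcal{P}_t$, so that the standard $L^2(\mathbb{S}^d,\lambda_d)$ Cauchy--Schwarz inequality yields
\[
\left|\left\langle \sum_{j=1}^{M}G_{y_j},P\right\rangle\right|\le \omega_d^{-1}\left\Vert\sum_{j=1}^{M}G_{y_j}\right\Vert_2\Vert P\Vert_2.
\]

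For any $P\in\partial\Omega$, I would then substitute the bound from Lemma~\ref{p_norm}, namely $\Vert P\Vert_2\le\sqrt{\frac{d+1}{d}\omega_d D(t+1,d+1)}$, together with the general bound $\Vert\sum_{j=1}^M G_{y_j}\Vert_2\le M\sqrt{\omega_d D_t}$ from \eqref{g1}. Multiplying these with the external factor $\omega_d^{-1}$, the three occurrences of $\omega_d$ cancel cleanly to give $M\sqrt{\tfrac{d+1}{d}D(t+1,d+1)D_t}$. Passing from $|\cdot|$ to a lower bound via $\langle \sum G_{y_j},P\rangle\ge -|\langle \sum G_{y_j},P\rangle|$ and taking the infimum over $P\in\partial\Omega$ produces the first claimed inequality.

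For the improved bound under the $t_1$-design hypothesis, I would repeat the same three-line computation, merely replacing the general estimate \eqref{g1} by the sharper \eqref{g2}, i.e. $\Vert\sum_{j=1}^M G_{y_j}\Vert_2\le M\sqrt{\omega_d(D_t-D_{t_1})}$. The result is identical except that $D_t$ becomes $D_t-D_{t_1}$, which yields the second stated bound.

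There is no genuine obstacle here: the proof is essentially bookkeeping. The only minor point that requires attention is consistently distinguishing $\langle\cdot,\cdot\rangle$ from $(\cdot,\cdot)$ and tracking the $\omega_d^{-1}$ factor through Cauchy--Schwarz, since otherwise the $\sqrt{\omega_d}$ appearing inside both $\Vert P\Vert_2$ and $\Vert\sum G_{y_j}\Vert_2$ would spuriously survive in the final estimate.
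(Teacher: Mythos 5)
Your proposal is correct and follows exactly the route the paper intends: the lemma is stated as an immediate consequence of Lemmas~\ref{p_norm} and~\ref{g_norm} via the Cauchy--Schwarz reduction $\bigl|\langle \sum_{j}G_{y_j},P\rangle\bigr|\le \omega_d^{-1}\Vert\sum_{j}G_{y_j}\Vert_2\Vert P\Vert_2$ given at the start of Section~\ref{sec:estimates}, with the $\omega_d$ factors cancelling as you describe.
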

%%%%

We are now able to state the existence of extensions to spherical $t$-designs with estimates of the required numbers of points.
%%%%
\begin{theorem}\label{thm4}
Let $t\in\mathbb{N}$, $C_{1,d}, C_{2,d}$ be constants as in Theorem~\ref{r1}, and  $C_{3,d}:=\sqrt{\frac{d+1}{d}}$. For any given  $\ptset{Y}_M=\{y_1,\ldots,y_M\}\subset \mathbb{S}^d$ and 
\begin{equation}\label{bd:N1}
N \geq \max\left(C_{1,d}t^d,C^{-1}_{2,d}C_{3,d}Mt\sqrt{D(t+1,d+1)D_t}\right),
\end{equation}  
there exists $\ptset{X}_N=\{x_1,\ldots,x_N\}\subset \mathbb{S}^d$  such that $\ptset{X}_N\cup\ptset{Y}_M$ is a spherical $t$-design. Moreover, if  $\ptset{Y}_M$ is a spherical $t_1$-design with $t_1  <  t$, then the lower bound of $N$ becomes 
\begin{equation}\label{bd:N2}
N\ge \max\left(C_{1,d}t^d,C^{-1}_{2,d}C_{3,d}Mt\sqrt{D_t-D_{t_1}}\allowbreak\sqrt{D(t+1,d+1)}\right).
\end{equation}
\end{theorem}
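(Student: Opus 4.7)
The plan is to make the proof of Theorem~\ref{thm3} quantitative by substituting the explicit lower bound from Lemma~\ref{Gp_norm} in place of the unspecified constant $m$. The logical skeleton is unchanged: I exhibit a continuous mapping $F\colon \mathcal{P}_t \to (\mathbb{S}^d)^N$ so that the composition $g=L'\circ F\colon\mathcal{P}_t\to\mathcal{P}_t$ satisfies $\langle P,g(P)\rangle>0$ on $\partial\Omega$, and then invoke Corollary~\ref{cor:A}.

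I would take $F$ to be the mapping supplied by Theorem~\ref{r1}; this is admissible since the estimate in Theorem~\ref{r1} is intrinsic to $F$ and depends only on $N,t,d$ and not on $\mathcal{Y}_M$. For any $N\ge C_{1,d}t^d$, Theorem~\ref{r1} gives
\[
\sum_{i=1}^N P(x_i(P)) > N C_{2,d} t^{-1}, \qquad P\in\partial\Omega.
\]
Combining with the general bound of Lemma~\ref{Gp_norm} yields
\[
\langle P, g(P)\rangle > N C_{2,d} t^{-1} - M C_{3,d}\sqrt{D(t+1,d+1)\, D_t}, \qquad P\in\partial\Omega,
\]
and the right-hand side is positive precisely when $N>C_{2,d}^{-1}C_{3,d} M t\sqrt{D(t+1,d+1) D_t}$. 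Taking $N$ above both $C_{1,d}t^d$ and this threshold yields the hypothesis \eqref{bd:N1}. For the case that $\mathcal{Y}_M$ is a spherical $t_1$-design, I instead apply the sharper estimate of Lemma~\ref{Gp_norm}, replacing $D_t$ by $D_t-D_{t_1}$, which reproduces \eqref{bd:N2} verbatim.

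The theorem then follows from Corollary~\ref{cor:A}: with $\langle P,g(P)\rangle>0$ on $\partial\Omega$, Theorem~\ref{basic} applied to the open bounded set $\Omega\subset\mathcal{P}_t$ produces some $Q\in\Omega$ with $g(Q)=0$, and by \eqref{Eq1} together with \eqref{rpk} the components of $F(Q)$ together with $\mathcal{Y}_M$ constitute a spherical $t$-design. I do not anticipate a genuine obstacle here; both key inputs, Theorem~\ref{r1} and Lemma~\ref{Gp_norm}, are already in hand, and the argument is essentially a bookkeeping step ensuring that the positive contribution of order $N/t$ from the moving points dominates the fixed (possibly negative) contribution from $\mathcal{Y}_M$. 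The only point requiring minor care is that the infimum in Lemma~\ref{Gp_norm} is attained as a genuine lower bound on $\partial\Omega$ (which is compact), so the inequalities above hold uniformly in $P$.
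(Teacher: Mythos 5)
Your proposal is correct and follows essentially the same route as the paper: take the mapping $F$ from Theorem~\ref{r1}, add the fixed contribution of $\mathcal{Y}_M$ bounded below via Lemma~\ref{Gp_norm}, and choose $N$ large enough that $NC_{2,d}t^{-1}-C_{3,d}M\sqrt{D(t+1,d+1)D_t}\ge 0$ (note the strict inequality from Theorem~\ref{r1} means equality at the threshold is already enough, so $N\ge$ rather than $N>$ suffices, matching \eqref{bd:N1}). The $t_1$-design case is handled identically with $D_t$ replaced by $D_t-D_{t_1}$, exactly as in the paper.
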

%%%%

%%%%
\begin{proof}
By Theorem \ref{thm3} and Lemma \ref{Gp_norm}, we have that
\[
\left\langle \sum^N_{i=1}G_{x_i(P)},P\right \rangle +\left \langle \sum^{M}_{j=1}G_{y_i},P\right \rangle > NC_{2,d}t^{-1} - C_{3,d}M\sqrt{D(t+1,d+1)D_t} \geq 0
\]
{implying} the existence of spherical $t$-design formed by $\ptset{X}_N\cup\ptset{Y}_M$. This gives the bound of $N$ in \eqref{bd:N1}. The bound $N$ in \eqref{bd:N2} for the case of $\ptset{Y}_M$ being a spherical $t_1$-design is similar.
\end{proof}
%%%%

By the result of Theorem~\ref{thm4}, we have the following corollary giving the order of $N$ for nested spherical designs.
%%%%
\begin{corollary}\label{cor_s4}
 Let  $\ptset{Y}_M:=\{y_1,\dots,y_M\}$ be a spherical $t_1$-design of optimal order $t_1^d$ and $t_1 <  t$. Then  $N+M$ is of order at most $t^{2d+1}$ for the existence of $\ptset{X}_N:=\{x_1,\dots,x_N\}$ such that $\ptset{X}_N\cup \ptset{Y}_M$ is a spherical $t$-design.
\end{corollary}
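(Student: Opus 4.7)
The plan is to apply Theorem~\ref{thm4} directly. Since $\mathcal{Y}_M$ is assumed to be a spherical $t_1$-design with $t_1<t$, the relevant bound on $N$ is the sharper one given by \eqref{bd:N2}. The task therefore reduces to showing that the right-hand side of \eqref{bd:N2} is of order at most $t^{2d+1}$, after which $N+M\le N+t_1^d$ will also be of order $t^{2d+1}$.

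The key ingredient is the polynomial growth of the dimension quantities appearing in the bound. From the explicit formula
\[
D(\ell,d)=\frac{2\ell+d-1}{\ell+d-1}\binom{\ell+d-1}{\ell},
\]
$D(\ell,d)$ is a polynomial in $\ell$ of degree $d-1$, so $D(\ell,d)\sim \ell^{d-1}$. Using $D_t=D(t,d+1)-1$ from Section~\ref{sec:pre}, we obtain $D_t\sim t^d$, and in particular $D(t+1,d+1)\sim t^d$ and $D_t-D_{t_1}\le D_t\sim t^d$. Substituting these asymptotics together with $M\sim t_1^d\le t^d$ into the second argument of the $\max$ in \eqref{bd:N2} gives
\[
C_{2,d}^{-1}C_{3,d}\,Mt\sqrt{D_t-D_{t_1}}\sqrt{D(t+1,d+1)}\le c_d\,M\,t\cdot t^{d/2}\cdot t^{d/2}=c_d\,M\,t^{d+1}\le c'_d\,t^{2d+1}.
\]
This dominates the competing term $C_{1,d}t^d$, so $N$ can be chosen of order $t^{2d+1}$, and adding $M\sim t_1^d\le t^d$ does not change this order, yielding $N+M$ of order at most $t^{2d+1}$.

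There is essentially no serious obstacle here; the corollary is a bookkeeping consequence of Theorem~\ref{thm4} together with the standard asymptotic $D(\ell,d)\sim \ell^{d-1}$. The only minor point worth noting is that the estimate $D_t-D_{t_1}\le D_t$ is wasteful when $t_1$ is close to $t$: a sharper analysis in that regime might improve the constants, but since the statement concerns the order in $t$ alone it is safe, and convenient, to bound uniformly by $D_t$.
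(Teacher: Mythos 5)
Your proposal is correct and follows essentially the same route as the paper: apply Theorem~\ref{thm4}, use $D(\ell,d)\sim\ell^{d-1}$ so that $D_t\sim D(t+1,d+1)\sim t^d$, and conclude $N+M\le C_d\,t^d\cdot t\cdot t^d = C_d\,t^{2d+1}$. The only cosmetic difference is that you start from the sharper bound \eqref{bd:N2} and then discard the gain via $D_t-D_{t_1}\le D_t$, whereas the paper works directly with the $\sqrt{D(t+1,d+1)D_t}$ form; the resulting estimate is identical.
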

%%%%

%%%%
\begin{proof}
Since $D(t,d) \sim (t+1)^{d-1}$ \cite{sloan2009variational}, $(t+1)^{d-1} \sim (t-1)^{d-1} \sim t^{d-1}$, and 
$
	D_t = D(t,d+1)-1 < D(t,d+1) \sim t^d,
$
we have $\sqrt{D(t+1,d+1)D_t}\le Ct^d$ for some constant $C$ and by Theorem~\ref{thm4}, we deduce that
\[
	(N+M)\sim Mt\sqrt{D(t+1,d+1)D_t} + M \le C_1 (t^d\cdot t\cdot t^{d} + t^d) \le C_2 t^{2d+1}, 
\]
where $C_1,C_2$ are constants depending only on $d$.
\end{proof}

%\section{Well-extended Point Sets}
%Given $M$ points, it can be seen that the previous estimates of required points is pessimistic since it does not directly consider the quantities (\ref{norm1}), (\ref{norm2}). Therefore, given $t,t_1, t>t_1$, we seek to numerically find an optimal set of $M$ points to construct nested spherical designs by the following definition:
%\begin{gather*}
% 	\arg \underset{y_1,\dots,y_M\in S^d}{\min} \sum_{\ell=t_1+1}^t \sum_{k=1}^{\text{dim}_\ell}\left(\sum_{j=1}^MY_{\ell,k}(y_j)\right)^2,\\
%	y_1,\dots,y_M\text{ is a spherical }t_1\text{ design.}
%\end{gather*} 
%We call such $M$ points a \textit{well-extended point set}.

%%%%%%%%%%%%%%%%%%%%%%%%%%%%%%
%%%%%%%%%%%%%%%%%%%%%%%%%%%%%%
%%%%%%%%%%%%%%%%%%%%%%%%%%%%%%
%%%%%%%%%%%%%%%%%%%%%%%%%%%%%%
\section{Discussion on the optimal order}\label{sec:discuss}
In contrast to the optimal order $t^d$ for a spherical $t$-design, our result in Corollary~\ref{cor_s4}  shows that an upper bound of the minimal $N+M$ 
in \emph{arbitrary} nested spherical $t$-designs with the $t_1$-design $\ptset{Y}_M$ of order $t_1^d$ is of order $t^{2d+1}$. This is, of course, a rough estimate. Moreover, the properties of $t_1$-designs are not involved. Following the setting in general spherical $t$-designs where the constant in the orders are identical for all $t$, it is thus natural to conjecture that the following statement is true.

\medskip

\noindent {\bf Conjecture.} \textit{For any $t_1,t\in \mathbb{N}$ such that $t_1 <   t$, there exist some spherical $t_1$-designs of order $t_1^d$ such that they can be extended to a spherical $t$-design of order $t^d$. Moreover, the constants (not depending on $t_1$ and $t$ but  $d$) in the orders are \textbf{identical} for any $t$ and $t_1$ satisfying $t_1 < t$.}

\medskip

%In our view, it could be difficult to prove the above statement or its negation since it may involve \emph{tight} spherical designs. However, 
If we specify the relation between $t_1$ and $t$, we can show that the problem actually becomes trivial. In fact, we have the following proposition.

\begin{proposition}\label{prop}
Given $m = p/q \in \mathbb{Q}$, $m > 1$, for any $t_1,t\in \mathbb{N}$ such that $mt_1=t$, there exist some spherical $t_1$-designs of order $t_1^d$ such that they can be extended to a spherical $t$-design of order $t^d$. Moreover, the constants (not depending on $t_1$ and $t$ but $d$ and $m$) in the orders are \textbf{identical} for any $t$ and $t_1$ satisfying $mt_1=t$.
\end{proposition}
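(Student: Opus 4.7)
The plan is to exploit the observation that, when $mt_1=t$ for a fixed rational $m$, the orders $t_1^d$ and $t^d$ differ only by the multiplicative factor $m^d$. Consequently, a single spherical $t$-design of optimal order already qualifies as a spherical $t_1$-design of optimal order $t_1^d$, which collapses the construction to a one-line argument once optimal-order $t$-designs are invoked.

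First, I would invoke the existence theorem of \cite{ostd}: for each $t\in\mathbb{N}$ there exists a spherical $t$-design $\mathcal{Z}_t\subset\mathbb{S}^d$ with $c_d\, t^d\le |\mathcal{Z}_t|\le C_d\, t^d$, where $c_d, C_d$ depend only on $d$. Setting $\ptset{Y}_M:=\mathcal{Z}_t$, the inclusion $\mathcal{P}_{t_1}\subset\mathcal{P}_t$ guarantees that $\ptset{Y}_M$ is also a spherical $t_1$-design. Substituting $t=mt_1$ yields
\[
c_d m^d\, t_1^d \le M \le C_d m^d\, t_1^d,
\]
so $\ptset{Y}_M$ is a spherical $t_1$-design of order $t_1^d$ with constants $c_d m^d$ and $C_d m^d$ depending only on $d$ and $m$.

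For the extension I would take $\ptset{X}_N$ to be a second spherical $t$-design of order $t^d$, for instance a multiset copy of $\mathcal{Z}_t$, which is permitted by the multiset convention set out in the introductory footnote. A direct check from the defining identity shows that the multiset union of two spherical $t$-designs is again a spherical $t$-design, hence $\ptset{X}_N\cup\ptset{Y}_M$ is a spherical $t$-design of cardinality at most $2C_d\, t^d$ and at least $2c_d\, t^d$, i.e., of order $t^d$ with constants $2c_d$ and $2C_d$ depending only on $d$. All four constants depend only on $d$ and $m$, as required.

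There is no substantive obstacle in this argument; the key insight is simply that fixing $t/t_1=m$ collapses the two orders $t_1^d$ and $t^d$ to the same order up to the constant factor $m^d$, and the remaining bookkeeping is immediate from the uniform-in-$t$ constants of \cite{ostd}. This confirms the authors' remark that specifying the relation $mt_1=t$ renders the problem trivial, so that the genuinely interesting question is the full conjecture stated just before the proposition, where no such bounded ratio between $t$ and $t_1$ is imposed.
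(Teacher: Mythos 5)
Your construction follows the same basic route as the paper (take an optimal-order $t$-design, observe it is automatically a $t_1$-design, and enlarge it by unioning with further $t$-designs), but it misses the point of the \textbf{identical}-constants clause, which is the only nontrivial content of the proposition. With your choice, the set $\ptset{Y}_M=\mathcal{Z}_t$ is a $t_1$-design with $M$ between $c_d m^d t_1^d$ and $C_d m^d t_1^d$, while the doubled set $\ptset{X}_N\cup\ptset{Y}_M$ is a $t$-design with $N+M$ between $2c_d t^d$ and $2C_d t^d$. The constant in front of $t_1^d$ is therefore $m^d$ times the base constant, while the constant in front of $t^d$ is $2$ times it; these coincide only in the coincidental case $m^d=2$. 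The proposition (see also the surrounding discussion: ``zero difference between the constants'') requires the \emph{same} constant $A$ with $M=A\,t_1^d$ and $N+M=A\,t^d$, not merely that all constants involved depend only on $d$ and $m$. Your closing sentence ``All four constants depend only on $d$ and $m$, as required'' addresses the parenthetical but not the bolded requirement.

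The repair is exactly the bookkeeping the paper performs, and it is where the rationality of $m=p/q$ is actually used. To pass from $M$ points to $m^dM$ points by adjoining whole copies of $t$-designs you need $(m^d-1)M$ extra points assembled from designs of admissible sizes, and $m^d-1=(p^d-q^d)/q^d$ is not an integer in general. So start instead from a $t$-design with $M=q^dC_d t^d$ points (so that $M/q^d=C_d t^d$ is itself a legal $t$-design size by Theorem~\ref{thm2}), and adjoin $(p^d-q^d)$ copies of a $t$-design with $M/q^d$ points. The union then has $M\,p^d/q^d=m^dM=p^dC_d t^d$ points, while the initial $t_1$-design has $M=q^dC_dt^d=p^dC_dt_1^d$ points, so both orders carry the single constant $p^dC_d=m^dq^dC_d$. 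Your observation that the multiset union of $t$-designs is again a $t$-design is correct and is also what the paper uses; only the count of added points needs this finer choice.
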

\begin{proof}
Let $C := q^dC_d$ with $C_d$ as in Theorem \ref{thm2}. We start from a $t$-design with $N=Ct^d$ points (the existence is guaranteed by Theorem \ref{thm2}). Note that it is also a $t_1$-design with $N=m^dCt_1^d$ points. Thus, to obtain a $t$-design of which the order has the same constant, we need to extend this $t$-design into another $t$-design with $m^dCt^d$ points. Note that $N/q^d=C_dt^d$ is also large enough to guarantee a $t$-design. Since the union of $t$-designs is still a $t$-design, we add a $t$-design with $N/q^d$ points $(p^d-q^d)$ times. This gives a $t$-design with $m^dCt^d$ points. In this way, the order for both $t_1$ and $t$ are the same as  $t_1^d$ and $t^d$, respectively, while the constant is always $m^dC$. 
\end{proof}

Note that the assumption $m\in\mathbb{Q}$ is natural since both $t_1, t$ are integers. 
Compared with Proposition~\ref{prop}, the difficulty in proving the {conjecture} is that we need to find an identical constant for orders $t_1^d$ and $t^d$ without specifying the relation $mt_1=t,m\in\mathbb{Q}$, $m>1$ but only the general condition $t_1<t$. As a result, the dependency of $m$ in the constants should be removed. On the other hand, both Corollary \ref{cor_s4} and the conjecture are stated for any pair of $t_1,t$ such that $t_1<t$ without further specification. Compared with Corollary \ref{cor_s4} in which the constant in the order $t^{2d+1}$ is larger than the constant in $t^d$, the conjecture requires not only a smaller order ($t^d<t^{2d+1}$) but also zero difference between the constants. Hence, the conjecture is more restricted and thus could be more difficult to prove.

We remark that in practice, one would seek not only the optimal order but also the least possible constant in the order. Nonetheless,  the same constant in the optimal order and the stated existence result in Proposition~\ref{prop} gives some evidence and guidance for numerical experiments in practice: one may set an identical constant to find nested spherical $t$-designs with multi-nested structure, e.g. $t_1=2t_2=4t_3=\cdots$, and that each $t_i$-design is of order $t_i^d$ and is contained in the $t_{i-1}$-design.

\section{Conclusion and final remarks}\label{sec:conclude}

We have shown the existence and estimates of required points to form a spherical $t$-design given a fixed set of points. Consequently, in the case that the given point set is a spherical $t_1$-design with $t_1 <  t$ and the number of given points is of optimal order $t_1^d$, we provide an upper bound of the minimal total number of given points and required points, which is of order $t^{2d+1}$.

Nonetheless, these results do not conclude the optimal order of nested spherical designs. Corollary \ref{cor_s4} is stated for a given arbitrary spherical $t_1$-design (of optimal order $t_1^d$) that leads the estimate of order $t^{2d+1}$ of $N+M$. The estimate is pessimistic and can be regarded as a worst-case estimate. On the other hand, we have further discussed a conjecture concerning the constants in the optimal orders of $t_1$ and $t$-designs. We show that a proposition derived from the conjecture with an extra condition is trivial. However, it could be difficult to confirm or reject the original conjecture.

Finally, Theorem \ref{thm3} suggests an notion of optimality which requires an $t_1$-design to have the minimum value of $\langle \sum^{M}_{j=1}G_{y_i},P\rangle$ on $\partial \Omega$ among all $t_1$-designs.  It remains to investigate (at least numerically) whether properties of spherical designs such as being well-separated or well-conditioned \cite{an2010well} lead to better estimates or fewer points in numerical experiments.

\section*{Acknowledgements}
We would like to thank anonymous reviewers for their valuable comments and suggestions that greatly helped us to improve the presentation of this paper. This work was supported in part by the Research Grants Council of the Hong Kong Special Administrative Region, China, under Project CityU 11309122 and Project CityU 11302023.

%%%%%%%%%%%%%%%%%%%%%%%%%%%%%%
%%%%%%%%%%%%%%%%%%%%%%%%%%%%%%
%%%%%%%%%%%%%%%%%%%%%%%%%%%%%%
%%%%%%%%%%%%%%%%%%%%%%%%%%%%%%
\bibliographystyle{abbrv}
\bibliography{reference}

\begin{thebibliography}{10}

\bibitem{an2010well}
C.~An, X.~Chen, I.~H. Sloan, and R.~S. Womersley.
\newblock Well conditioned spherical designs for integration and interpolation
  on the two-sphere.
\newblock {\em SIAM {J}ournal on {N}umerical {A}nalysis}, 48(6):2135--2157,
  2010.

\bibitem{bajnok1992construction}
B.~Bajnok.
\newblock Construction of spherical $t$-designs.
\newblock {\em Geometriae Dedicata}, 43:167--179, 1992.

\bibitem{STD}
E.~Bannai and E.~Bannai.
\newblock A survey on spherical designs and algebraic combinatorics on spheres.
\newblock {\em European Journal of Combinatorics}, 30(6):1392--1425, 2009.

\bibitem{ostd}
A.~Bondarenko, D.~Radchenko, and M.~Viazovska.
\newblock Optimal asymptotic bounds for spherical designs.
\newblock {\em Annals of {M}athematics}, pages 443--452, 2013.

\bibitem{bondarenko2015well}
A.~Bondarenko, D.~Radchenko, and M.~Viazovska.
\newblock Well-separated spherical designs.
\newblock {\em Constructive Approximation}, 41(1):93--112, 2015.

\bibitem{bondarenko2010spherical}
A.~V. Bondarenko and M.~S. Viazovska.
\newblock Spherical designs via {B}rouwer fixed point theorem.
\newblock {\em SIAM Journal on Discrete Mathematics}, 24(1):207--217, 2010.

\bibitem{bourgain1988distribution}
J.~Bourgain and J.~Lindenstrauss.
\newblock Distribution of points on spheres and approximation by zonotopes.
\newblock {\em Israel Journal of Mathematics}, 64:25--31, 1988.

\bibitem{casazza2013introduction}
P.~G. Casazza, G.~Kutyniok, and F.~Philipp.
\newblock Introduction to finite frame theory.
\newblock {\em Finite frames: Theory and Applications}, pages 1--53, 2013.

\bibitem{cho2006topological}
Y.~J. Cho and Y.-Q. Chen.
\newblock {\em Topological Degree Theory and Applications}.
\newblock CRC Press, 2006.

\bibitem{dai2013approximation}
F.~Dai.
\newblock {\em Approximation Theory and Harmonic Analysis on Spheres and
  Balls}.
\newblock Springer, 2013.

\bibitem{delsarte1991spherical}
P.~Delsarte, J.-M. Goethals, and J.~J. Seidel.
\newblock Spherical codes and designs.
\newblock In {\em Geometry and Combinatorics}, pages 68--93. Elsevier, 1991.

\bibitem{feng2006generation}
D.-J. Feng, L.~Wang, and Y.~Wang.
\newblock Generation of finite tight frames by {H}ouseholder transformations.
\newblock {\em Advances in Computational Mathematics}, 24(1):297--309, 2006.

\bibitem{freeden1998constructive}
W.~Freeden, T.~Gervens, and M.~Schreiner.
\newblock {\em Constructive Approximation on the Sphere: with Applications to
  Geomathematics}.
\newblock Oxford University Press, 1998.

\bibitem{gia2010multiscale}
Q.~L. Gia, I.~H. Sloan, and H.~Wendland.
\newblock Multiscale analysis in {S}obolev spaces on the sphere.
\newblock {\em SIAM {J}ournal on {N}umerical {A}nalysis}, 48(6):2065--2090,
  2010.

\bibitem{GL}
K.~Hesse and R.~S. Womersley.
\newblock Numerical integration with polynomial exactness over a spherical cap.
\newblock {\em Advances in Computational Mathematics}, 36:451--483, 2012.

\bibitem{iverson2021note}
J.~W. Iverson, E.~J. King, and D.~G. Mixon.
\newblock A note on tight projective 2-designs.
\newblock {\em Journal of Combinatorial Designs}, 29(12):809--832, 2021.

\bibitem{korevaar1993spherical}
J.~Korevaar and J.~Meyers.
\newblock Spherical {F}araday cage for the case of equal point charges and
  {C}hebyshev-type quadrature on the sphere.
\newblock {\em Integral Transforms and Special Functions}, 1(2):105--117, 1993.

\bibitem{kuijlaars1998asymptotics}
A.~Kuijlaars and E.~Saff.
\newblock Asymptotics for minimal discrete energy on the sphere.
\newblock {\em Transactions of the American Mathematical Society},
  350(2):523--538, 1998.

\bibitem{le2008localized}
Q.~T. Le~Gia and H.~N. Mhaskar.
\newblock Localized linear polynomial operators and quadrature formulas on the
  sphere.
\newblock {\em SIAM Journal on Numerical Analysis}, pages 440--466, 2008.

\bibitem{li2016poincar}
A.~Li, Y.~Lou, and Y.~Ji.
\newblock A {P}oincar\'e--type inequality on the {E}uclidean unit sphere.
\newblock {\em Journal of Mathematical Inequalities}, 10(3):807--817, 2016.

\bibitem{massey2008tight}
P.~Massey and M.~Ruiz.
\newblock Tight frame completions with prescribed norms.
\newblock {\em Sampling Theory in Signal \& Image Processing}, 7(1), 2008.

\bibitem{mhaskar2001spherical}
H.~Mhaskar, F.~Narcowich, and J.~Ward.
\newblock Spherical {M}arcinkiewicz-{Z}ygmund inequalities and positive
  quadrature.
\newblock {\em Mathematics of Computation}, 70(235):1113--1130, 2001.

\bibitem{muller2006spherical}
C.~M{\"u}ller.
\newblock {\em Spherical Harmonics}, volume~17.
\newblock Springer, 2006.

\bibitem{narcowich2006localized}
F.~J. Narcowich, P.~Petrushev, and J.~D. Ward.
\newblock Localized tight frames on spheres.
\newblock {\em SIAM Journal on Mathematical Analysis}, 38(2):574--594, 2006.

\bibitem{sloan1995polynomial}
I.~H. Sloan.
\newblock Polynomial interpolation and hyperinterpolation over general regions.
\newblock {\em Journal of Approximation Theory}, 83(2):238--254, 1995.

\bibitem{sloan2009variational}
I.~H. Sloan and R.~S. Womersley.
\newblock A variational characterisation of spherical designs.
\newblock {\em Journal of Approximation Theory}, 159(2):308--318, 2009.

\bibitem{wagner1991averaging}
G.~Wagner and B.~Volkmann.
\newblock On averaging sets.
\newblock {\em Monatshefte f{\"u}r Mathematik}, 111(1):69--78, 1991.

\bibitem{waldron2017sharpening}
S.~Waldron.
\newblock A sharpening of the {W}elch bounds and the existence of real and
  complex spherical $ t $--designs.
\newblock {\em IEEE Transactions on Information Theory}, 63(11):6849--6857,
  2017.

\bibitem{wang2020tight}
Y.~G. Wang and X.~Zhuang.
\newblock Tight framelets and fast framelet filter bank transforms on
  manifolds.
\newblock {\em Applied and Computational Harmonic Analysis}, 48(1):64--95,
  2020.

\bibitem{xiao2023spherical}
Y.~Xiao and X.~Zhuang.
\newblock Spherical framelets from spherical designs.
\newblock {\em SIAM Journal on Imaging Sciences}, 16(4):2072--2104, 2023.

\end{thebibliography}
%\nocite{*}

\appendix
%\begin{appendix}
\section{Theorems}

For the sake of being self-contained, as in \cite{ostd}, here we state some other theorems that will be used in the following proofs.

\renewcommand*{\thetheorem}{\Alph{theorem}}
\setcounter{theorem}{2}
\begin{theorem}[\cite{bourgain1988distribution,kuijlaars1998asymptotics}]\label{app_thm_1}
For each $N\in \mathbb{N}$, there exists an area-regular partition $\mathcal{R}=\{R_1,\dots,R_N\}$ with $\Vert \mathcal{R}\Vert \leq B_dN^{-1/d}$ for some constant $B_d$ large enough.
\end{theorem}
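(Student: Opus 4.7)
The plan is to prove the theorem by induction on the dimension $d$, using a recursive collar--cap decomposition in the spirit of Kuijlaars--Saff. For the base case $d=1$, the circle $\mathbb{S}^1$ can be partitioned into $N$ equal arcs, each of diameter $2\pi/N$, so the bound holds with $B_1 = 2\pi$.

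For the inductive step I would parametrize $\mathbb{S}^d$ by the polar angle $\theta\in[0,\pi]$ and transverse direction $\eta\in\mathbb{S}^{d-1}$, writing $x=(\sin\theta\cdot\eta,\cos\theta)$, so that the normalized measure factorizes as $c_d\sin^{d-1}(\theta)\,d\theta\,d\eta$. First I carve off two polar caps of area exactly $1/N$ at the north and south poles; solving the area equation yields a polar angle $\theta_0\sim N^{-1/d}$, so each cap has diameter at most $2\theta_0\lesssim N^{-1/d}$. Next I partition the belt $\theta_0\le\theta\le\pi-\theta_0$ into roughly $N^{1/d}$ collars $Z_i=\{\theta_{i-1}\le\theta\le\theta_i\}$ by choosing latitudes $\theta_i$ such that the heights $\theta_i-\theta_{i-1}$ are all of order $N^{-1/d}$ and each collar has area exactly $n_i/N$ for some positive integer $n_i$. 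A counting argument (using $\sum_i n_i + 2 = N$ and the factorized measure) gives $n_i\sim N^{(d-1)/d}$. Finally, I apply the inductive hypothesis to $\mathbb{S}^{d-1}$ to partition it into $n_i$ area-regular cells of diameter at most $B_{d-1}n_i^{-1/(d-1)}\lesssim N^{-1/d}$, and pull this partition back to $Z_i$ through the $\eta$-coordinate.

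The diameter of each resulting cell is bounded by combining the latitudinal extent $\theta_i-\theta_{i-1}\lesssim N^{-1/d}$ with the transverse extent $\sin(\theta_i)\cdot B_{d-1}n_i^{-1/(d-1)}\lesssim N^{-1/d}$, where the factor $\sin(\theta_i)\le 1$ converts the $\mathbb{S}^{d-1}$-diameter into geodesic distance on $\mathbb{S}^d$. Taking the supremum of the implicit constants across collars then yields a single constant $B_d$ depending only on $d$.

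The main obstacle is the rounding step in the choice of latitudes: the $\theta_i$'s must be chosen so that each collar has area \emph{exactly} $n_i/N$ for some integer $n_i$, not merely approximately. I would handle this by first fixing a target number of collars $m\sim N^{1/d}$ together with the ``ideal'' equi-distributed cumulative areas $iA/m$ (where $A$ is the belt area), then setting $n_i$ to the nearest integer to the ideal value and solving back implicitly for the actual $\theta_i$. The accumulated rounding error in each latitude stays $O(1/N)$ in area, so the collar heights are only perturbed by a constant factor and the final residual is absorbed by slightly enlarging the polar caps (at most multiplying their diameters by a $d$-dependent constant). A secondary subtlety is the regime of very small $N$, where the construction may degenerate; this is handled trivially by enlarging $B_d$ to cover the finitely many exceptional cases.
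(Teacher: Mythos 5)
The paper does not prove this statement: Theorem~\ref{app_thm_1} is quoted from the literature (Bourgain--Lindenstrauss, Kuijlaars--Saff) and used as a black box, so there is no internal proof to compare against. Your proposal reconstructs exactly the standard cap-and-collar construction from those sources (and from Leopardi's and Feige--Schechtman's equal-area partitions): induction on $d$, two polar caps of area $1/N$, latitudinal collars of height $\asymp N^{-1/d}$ whose areas are integer multiples of $1/N$, and a pulled-back area-regular partition of $\mathbb{S}^{d-1}$ inside each collar. The outline is sound, and your treatment of the rounding of the latitudes and of small $N$ is the right way to handle the standard technicalities. One step is justified too coarsely, though it is repairable: the claim $n_i\sim N^{(d-1)/d}$, and hence $n_i^{-1/(d-1)}\lesssim N^{-1/d}$, holds only for mid-latitude collars. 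For a collar adjacent to a cap one has $\theta\asymp N^{-1/d}$, so its area is $\asymp\sin^{d-1}(\theta)\,N^{-1/d}\asymp N^{-1}$ and $n_i=O(1)$; there your chain ``$\sin(\theta_i)\le 1$ and $n_i^{-1/(d-1)}\lesssim N^{-1/d}$'' fails. The correct uniform statement is that $n_i\asymp N^{(d-1)/d}\sin^{d-1}(\theta_i)$ (up to the $n_i\ge 1$ floor), so that the product $\sin(\theta_i)\cdot n_i^{-1/(d-1)}\asymp N^{-1/d}$ is bounded for \emph{every} collar --- the small $\sin\theta$ near the poles compensates the large $n_i^{-1/(d-1)}$. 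With that bookkeeping made explicit (and with near-empty collars merged into the caps), your argument gives the theorem.
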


\begin{theorem}[\cite{mhaskar2001spherical}]\label{app_thm_2}
There exists a constant $r_d$ such that for each area-regular partition $\mathcal{R}=\{R_1,\dots,R_N\}$ with $\Vert \mathcal{R}\Vert <\frac{r_d}{m}$, each collection of points $x_i\in R_i(i=1,\dots,N)$, and each polynomial $P$ of total degree $m$, the inequality
\[
	\frac{1}{2}\int_{\mathbb{S}^d}|P(x)|d\mu_d(x)\leq \frac{1}{N}\sum_{i=1}^N|P(x_i)|\leq \frac{3}{2}\int_{\mathbb{S}^d}|P(x)|d\mu_d(x)
\]
holds.
\end{theorem}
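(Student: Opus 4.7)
The plan is to prove this Marcinkiewicz--Zygmund-type inequality by reducing it to a cell-by-cell estimate of $|P(x)-P(x_i)|$, and then controlling that estimate by a Bernstein-type inequality on $\mathbb{S}^d$.

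First I would use the area-regularity $\mu_d(R_i)=1/N$ to rewrite the quadrature error as
\begin{equation*}
\frac{1}{N}\sum_{i=1}^N |P(x_i)| \;-\; \int_{\mathbb{S}^d}|P(x)|\,d\mu_d(x)
\;=\; \sum_{i=1}^N\int_{R_i}\bigl(|P(x_i)|-|P(x)|\bigr)\,d\mu_d(x),
\end{equation*}
so that the reverse triangle inequality $||a|-|b||\le|a-b|$ reduces the problem to upper-bounding
\begin{equation*}
E(P)\;:=\;\sum_{i=1}^N \int_{R_i} |P(x)-P(x_i)|\,d\mu_d(x).
\end{equation*}
Given the target two-sided bound with constants $1/2$ and $3/2$, it suffices to show $E(P)\le \tfrac{1}{2}\int_{\mathbb{S}^d}|P|\,d\mu_d$.

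Next I would bound each integrand via the fundamental theorem of calculus along the minimizing geodesic from $x_i$ to $x\in R_i$: since the arc length is at most $\|\mathcal{R}\|$, this yields $|P(x)-P(x_i)|\le \|\mathcal{R}\|\cdot \sup_{R_i^\ast}|\nabla P|$, where $R_i^\ast$ is a slight geodesic enlargement of $R_i$ (so that every geodesic between points of $R_i$ stays inside it). Summing,
\begin{equation*}
E(P)\;\le\; \|\mathcal{R}\|\,\sum_{i=1}^N \mu_d(R_i)\,\sup_{R_i^\ast}|\nabla P|.
\end{equation*}

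The core of the argument is a Bernstein/Markov-type estimate on the sphere: for every spherical ball $B$ of radius $\lesssim 1/m$ and every polynomial $P$ of degree $\le m$, one has
\begin{equation*}
\sup_{B}|\nabla P|\;\le\; C\,m\cdot\frac{1}{\mu_d(2B)}\int_{2B}|P|\,d\mu_d,
\end{equation*}
with $C$ depending only on $d$. This can be obtained by composing Bernstein's pointwise inequality $|\nabla P|\le Cm\|P\|_{L^\infty(2B)}$ with the standard polynomial reverse-H\"older estimate on balls of radius $\sim 1/m$. Choosing $r_d$ small enough that $R_i^\ast$ sits inside such a ball $B_i$ with $\mu_d(B_i)\sim \mu_d(R_i)$ (here I use both area-regularity and $\|\mathcal{R}\|<r_d/m$), and noting that the doubled balls $2B_i$ have bounded overlap (since their volumes are $\sim 1/N$ and they cover only a bounded multiple of $\mathbb{S}^d$), I would sum the local estimates to obtain
\begin{equation*}
\sum_{i=1}^N \mu_d(R_i)\,\sup_{R_i^\ast}|\nabla P|\;\le\; C'\,m\int_{\mathbb{S}^d}|P|\,d\mu_d.
\end{equation*}
Combining with the previous display gives $E(P)\le C'\,m\,\|\mathcal{R}\|\int|P|\,d\mu_d \le C'\,r_d\int|P|\,d\mu_d$, and choosing $r_d$ so that $C'r_d\le 1/2$ completes the proof.

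The main obstacle is the Bernstein-type inequality of the third paragraph: one needs the constant $C$ to be independent of $m$, which requires the radius to scale as $1/m$, and one must be careful that the reverse-H\"older step, the enlargement $R_i\mapsto R_i^\ast$, and the bounded-overlap count all contribute only constants that depend on $d$ alone. A subsidiary difficulty is verifying geodesic convexity of a small enlargement of a generic partition cell, which is where the smallness of $r_d$ relative to the injectivity radius of $\mathbb{S}^d$ is used.
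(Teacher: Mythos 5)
First, a point of reference: the paper does not prove this statement at all --- it is Theorem~\ref{app_thm_2}, quoted from \cite{mhaskar2001spherical} and used as a black box in the appendix. So your proposal has to be judged on its own merits, and unfortunately its central step is not merely unproved but false as stated. Your reduction to $E(P):=\sum_i\int_{R_i}|P(x)-P(x_i)|\,d\mu_d\le \tfrac12\int_{\mathbb{S}^d}|P|\,d\mu_d$ and the cell-wise bound $|P(x)-P(x_i)|\le \Vert\mathcal{R}\Vert\sup_{R_i^\ast}|\nabla P|$ are fine. The problem is the claimed local inequality $\sup_B|\nabla P|\le Cm\,\mu_d(2B)^{-1}\int_{2B}|P|\,d\mu_d$ on caps $B$ of radius $\delta\sim 1/m$. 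A degree-$m$ spherical polynomial is \emph{not} locally controlled at scale $1/m$ with a Bernstein factor $m$: take $P(x)=G_m(\langle x,e\rangle)$ with $G_m$ the Chebyshev polynomial of degree $m$ adapted to the interval $[\cos 2\delta,1]$ (an interval of length $\approx 2\delta^2$). Then $\Vert P\Vert_{L^\infty(B(e,2\delta))}=1$ and the average of $|P|$ over $B(e,2\delta)$ is bounded below by an absolute constant, yet at points of $B(e,\delta)$ one has $|\nabla P|=|G_m'(u)|\sqrt{1-u^2}\sim (m/\delta^2)\cdot\delta=m/\delta\sim m^2$. This is the Markov-type endpoint effect: the correct local Bernstein constant at scale $\delta$ is of order $m/\delta$, not $m$, so your final bound degenerates to $E(P)\le C\,r_d\,m\int|P|$, which cannot be made small uniformly in $m$ by shrinking $r_d$. (This example does not contradict the theorem itself, because for such $P$ the \emph{global} integral $\int_{\mathbb{S}^d}|P|$ is exponentially large in $m$; it only shows that the per-cell localization is where the argument leaks.)

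The known proofs avoid exactly this trap. One route (Mhaskar--Narcowich--Ward) represents $P$ through a highly localized reproducing kernel and estimates the oscillation over each cell directly; another (the Mastroianni--Totik/Dai doubling-weight machinery) proves the \emph{summed} inequality $\sum_i\mu_d(R_i)\,\mathrm{osc}_{R_i^\ast}(P)\le C\,m\Vert\mathcal{R}\Vert\int_{\mathbb{S}^d}|P|\,d\mu_d$ by dominating $\sup_{R_i^\ast}|\nabla P|$ not by a local average of $|P|$ but by a Hardy--Littlewood-type maximal function $\bigl(\mathcal{M}(|\nabla P|^{1/\gamma})\bigr)^{\gamma}(x_i)$ and then invoking the global $L^1$ Bernstein inequality $\Vert\nabla P\Vert_1\le Cm\Vert P\Vert_1$ together with the boundedness of $\mathcal{M}$. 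Either ingredient is of comparable depth to the theorem itself, which is presumably why the paper cites it rather than proving it. A secondary, fixable imprecision: your bounded-overlap step asserts $\mu_d(B_i)\sim\mu_d(R_i)=1/N$, but area-regularity only gives $1/N\le C\Vert\mathcal{R}\Vert^d$, not the reverse, so the overlap multiplicity can be as large as $N\Vert\mathcal{R}\Vert^d$; the argument survives only because this factor cancels against the ratio $\mu_d(R_i)/\mu_d(2B_i)$, and you should carry out that weighted count explicitly.
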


\begin{theorem}[{\cite[Corollary 1]{ostd}}]\label{app_thm_3}
For each area-regular partition $\mathcal{R}=\{R_1,\dots,\allowbreak R_N\}$ with $\Vert \mathcal{R}\Vert <\frac{r_d}{m+1}$, each collection of points $x_i \in R_i(i=1,\dots,N)$, and each polynomial $P$ of total degree $m$,
\[
	\frac{1}{3\sqrt{d}}\int_{\mathbb{S}^d}|\nabla P(x)|d\mu_d(x) \leq \frac{1}{N}\sum_{i=1}^N|\nabla P(x_i)| \leq 3\sqrt{d}\int_{\mathbb{S}^d}|\nabla P(x)|d\mu_d(x).
\]
\end{theorem}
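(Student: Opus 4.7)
The plan is to reduce the vector-valued Marcinkiewicz--Zygmund inequality for $\nabla P$ to the scalar version supplied by Theorem~\ref{app_thm_2}, applied componentwise, and then to repair the discrepancy between the $\ell^1$ and $\ell^2$ norms on $\mathbb{R}^{d+1}$. The first step is degree bookkeeping: viewing the spherical gradient on $\mathbb{S}^d$ as the tangential projection $\partial P/\partial x - (x\cdot \partial P/\partial x)\,x$ of the Euclidean gradient of (any polynomial extension of) $P$, each scalar component of $\nabla P$ is a polynomial of total degree at most $m+1$ on $\mathbb{S}^d$. This is precisely why the hypothesis $\|\mathcal{R}\| < r_d/(m+1)$ (rather than $r_d/m$) is imposed: it is exactly what Theorem~\ref{app_thm_2} requires in order to yield, for every component $\nabla_i P$,
\[
\tfrac{1}{2}\int_{\mathbb{S}^d}|\nabla_i P|\,d\mu_d \;\le\; \tfrac{1}{N}\sum_{j=1}^N |\nabla_i P(x_j)| \;\le\; \tfrac{3}{2}\int_{\mathbb{S}^d}|\nabla_i P|\,d\mu_d.
\]

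The second step repackages these componentwise bounds as the single vector-valued inequality by applying the elementary chain $(\sum_i a_i^2)^{1/2} \le \sum_i |a_i| \le \sqrt{d+1}\,(\sum_i a_i^2)^{1/2}$ with $a_i = \nabla_i P$. For the upper bound of the claim I would majorise $|\nabla P(x_j)|$ by $\sum_i |\nabla_i P(x_j)|$, apply the scalar upper bound componentwise, and then absorb $\sum_i\int|\nabla_i P|$ back into $\sqrt{d+1}\int|\nabla P|$. For the lower bound the same maneuver is executed with the integral and the discrete sum playing swapped roles: start from $\int|\nabla P|\le \sum_i\int|\nabla_i P|$, apply the scalar lower bound to each $\nabla_i P$, and close with $\sum_i |\nabla_i P(x_j)| \le \sqrt{d+1}\,|\nabla P(x_j)|$.

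Combining the two steps produces overall constants $\tfrac{3}{2}\sqrt{d+1}$ and $2\sqrt{d+1}$, both of which are dominated by $3\sqrt{d}$ for every integer $d\ge 1$ (the binding check reduces to $9d\ge 4(d+1)$), so the stated bounds $1/(3\sqrt{d})$ and $3\sqrt{d}$ fall out. The main obstacle I would expect is not any of these estimates individually but the first step: one must be scrupulous about the paper's two slightly conflicting formulas for $\nabla$---the Euclidean gradient of the radial extension $P(x/|x|)$ versus the tuple of Cartesian partials $\partial P/\partial\xi_i$---because only when one consistently uses the tangential version does the componentwise reduction feed genuine polynomials of degree $m+1$ on $\mathbb{S}^d$ into Theorem~\ref{app_thm_2} under the stated hypothesis on $\|\mathcal{R}\|$.
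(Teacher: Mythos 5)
Your argument is correct, and it is essentially the standard derivation: the paper itself states this result without proof, quoting it as \cite[Corollary~1]{ostd}, and the proof given there proceeds exactly as you propose --- apply the scalar Marcinkiewicz--Zygmund inequality of Theorem~\ref{app_thm_2} to each component of $\nabla P$ (a polynomial of degree at most $m+1$ on $\mathbb{S}^d$, which is why the hypothesis is $\Vert\mathcal{R}\Vert<r_d/(m+1)$), then pass between the $\ell^1$ and $\ell^2$ norms on $\mathbb{R}^{d+1}$, absorbing the resulting constants $\tfrac32\sqrt{d+1}$ and $2\sqrt{d+1}$ into $3\sqrt{d}$. Your caution about using the tangential (degree-$(m+1)$) components rather than raw Cartesian partials is also well placed and consistent with how the paper itself uses this fact in the proof of Lemma~\ref{p_norm}.
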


%%%%%%%%%%%%%%%%%%%%%%%%
%%%%%%%%%%%%%%%%%%%%%%%%
%%%%%%%%%%%%%%%%%%%%%%%%
%%%%%%%%%%%%%%%%%%%%%%%%
\section{Proof of Theorem \ref{r1}}\label{appendix:A}

\begin{proof}
For $d,t\in \mathbb{N}$, take $C_{1,d}>(108B_d/r_d)^d$, where $B_d$ is as in Theorem \ref{app_thm_1} and $r_d$ is as in Theorem \ref{app_thm_2} and $N\geq C_{1,d}t^d$. %A mapping $F:\mathcal{P}_t\to (\mathbb{S}^d)^N$ is constructed by first 
We take an area-regular partition $\mathcal{R}=\{R_1,\dots,R_N\}$ with
\begin{equation}\label{app1}
\Vert \mathcal{R}\Vert \leq B_dN^{-1/d}<\frac{r_d}{108dt}
\end{equation}
from Theorem \ref{app_thm_1}. For each $i=1,\dots,N$, we choose an arbitrary $\tilde{x}_i\in R_i$. 
Define $U:\mathcal{P}_t \times \mathbb{S}^d \to \mathbb{R}^{d+1}$ such that 
$
	U(P,w) = \frac{\nabla P(w)}{h_\varepsilon(|\nabla P(w)|)} 
$
where
$\varepsilon = \frac{1}{6\sqrt{d}}$ and 
\[
	h_{\varepsilon}(u):= \begin{cases}
	u\quad \text{if } u>\varepsilon,\\
	\varepsilon\quad \text{otherwise.}
\end{cases}
\]
%and $U:\mathcal{P}_t \times \mathbb{S}^d \to \mathbb{R}^{d+1}$ such that 
%\[
%	U(P,w) = \frac{\nabla P(w)}{h_\varepsilon(|\nabla P(w)|)}.
%\]
For each $i = 1,\dots,N$, define $w_i:\mathcal{P}_t\times[0,\infty) \to \mathbb{S}^d$ be the map satisfying the differential equation
\begin{equation}\label{app2}
\frac{d}{ds}w_i(P,s)=U(P,w_i(P,s))
\end{equation}
with the initial condition
\[	
w_i(P,0) = \tilde{x}_i
\]
for each $P\in \mathcal{P}_t$. Each mapping $w_i$ has its values in $\mathbb{S}^d$ by {the} definition of spherical gradient. By the fact that the mapping $U(P,w)$ is Lipschitz continuous in both $P$ and $w$, each $w_i$ is well defined and continuous in both $P$ and $s$, where the metric on $\mathcal{P}_t$ is given by the inner product. 

The mapping $F:\mathcal{P}_t\to (\mathbb{S}^d)^N $ is defined as
\begin{equation}\label{app3}
F(P) = (x_1(P),\dots,x_N(P)):= \left(w_1(P,\frac{r_d}{3t}),\dots,w_N(P,\frac{r_d}{3t})\right).
\end{equation}
By definition, $F$ is continuous on $\mathcal{P}_t$. It remains to show that there exists a constant $C_{2,d}$ depending on $d$ such that 
\begin{equation}\label{app4}
	\sum^N_{i=1}P(x_i(P)) >  NC_{2,d}t^{-1}> 0,\quad \forall P\in \partial \Omega.
\end{equation}
Fix $P\in\partial\Omega$, i.e., 
\[
	\int_{\mathbb{S}^d}|\nabla P(x)|d\mu_d(x) = 1.
\]

For brevity we write $w_i(s)$ to replace $w_i(P,s)$. The Newton-Leibniz formula gives
\begin{equation}\label{app5}
\begin{aligned}
	\frac{1}{N}\sum_{i=1}^NP(x_i(P)) &= \frac{1}{N}\sum_{i=1}^NP(w_i(r_d/3t))\\
	&= \frac{1}{N}\sum_{i=1}^NP(\tilde{x}_i) + \int_0^{r_d/3t}\frac{d}{ds}\left[\frac{1}{N}\sum_{i=1}^NP(w_i(s))\right]ds.
\end{aligned}
\end{equation}
In the {remainder of this proof}, we will estimate the value
\[
 \left| \frac{1}{N}\sum_{i=1}^NP(\tilde{x}_i)\right|
\]
from above and the value
\[
	\frac{d}{ds}\left[ \frac{1}{N}\sum_{i=1}^NP(w_i(s))\right]
\]
from below for each $s\in [0,r_d/3t]$. 

We obtain
\begin{align*}
 \left| \frac{1}{N}\sum_{i=1}^NP(\tilde{x}_i)\right| &= \left| \sum_{i=1}^N \int_{R_i}P(\tilde{x}_i)-P(x)d\mu_d(x)\right| \leq \sum_{i=1}^N |P(\tilde{x}_i)-P(x)|d\mu_d(x)\\
&\leq \frac{\Vert \mathcal{R}\Vert}{N}\sum_{i=1}^N\underset{z\in \mathbb{S}^d:\text{dist}(z,\tilde{x}_i)\leq \Vert \mathcal{R}\Vert}{\max}|\nabla P(z)|,
\end{align*}
where $\text{dist}(z,\tilde{x}_i)$ denotes the geodesic distance between $z$ and $\tilde{x}_i$. Hence, for $z_i \in \mathbb{S}^d$ such that $\text{dist}(z_i,\tilde{x}_i)\leq \|\mathcal{R}\|$ and
\[
|\nabla P(z_i)| = \underset{z\in S^d:\text{dist}(z,\tilde{x}_i)\leq \Vert \mathcal{R}\Vert}{\max} |\nabla P(z)|,
\]
we have
\[
	\left| \frac{1}{N}\sum_{i=1}^NP(\tilde{x}_i)\right| \leq \frac{\Vert \mathcal{R}\Vert}{N}\sum_{i=1}^N|\nabla P(z_i)|.
\]

Define another area-regular partition $\mathcal{R}' = \{R'_1,\dots,R'_N\}$ by $R'_i=R_i\cup\{z_i\}$. It is obvious that $\Vert \mathcal{R}'\Vert \leq 2\Vert \mathcal{R}\Vert$. Combined with (\ref{app1}), we have $\Vert \mathcal{R}'\Vert \leq r_d/(54dt)$. Using {the} inequality in Theorem \ref{app_thm_3} to the partition $\mathcal{R}'$ and the collection of points $z_i$, we have
\begin{equation}\label{app6}
\left| \frac{1}{N}\sum_{i=1}^N P(\tilde{x}_i) \right| \leq 3\sqrt{d}\Vert \mathcal{R}\Vert \int_{\mathbb{S}^d}|\nabla P(x)|d\mu_d(x) < \frac{r_d}{36\sqrt{d}t}
\end{equation}
for any $P\in \partial \Omega$. On the other hand, the differential equation (\ref{app2}) implies
\begin{equation}\label{app7}
\begin{aligned}
\frac{d}{ds}\left[ \frac{1}{N}\sum_{i=1}^NP(w_i(s))\right] &= \frac{1}{N}\sum_{i=1}^{N}\frac{|\nabla P(w_i(s))|^2}{h_\varepsilon(|\nabla P(w_i(s))|)}\\
&\geq \frac{1}{N}\sum_{i:|\nabla P(w_i(s))|\geq \varepsilon} |\nabla P(w_i(s))|\\
&\geq \frac{1}{N}\sum_{i=1}^N|\nabla P(w_i(s))|-\varepsilon.
\end{aligned}
\end{equation}
Since 
\[
\left|\frac{\nabla P(w)}{h_\varepsilon(|\nabla P(w)|)}\right| \leq 1
\]
for each $w\in \mathbb{S}^d$, it follows again from (\ref{app2}) that $\left|\frac{dw_i(s)}{ds}\right|\leq 1$ and therefore
\[
	\text{dist}(\tilde{x}_i,w_i(s))\leq s.
\]

For each $s\in [0,r_d/3t]$, define another partition $\mathcal{R}''=\{R''_1,\dots,R''_N\}$ given by $R''_i = R_i\cup\{w_i(s)\}$. By (\ref{app1}), we have
\[
	\Vert \mathcal{R}''\Vert < \frac{r_d}{108dt}+\frac{r_d}{3t},
\]
and thus we can apply Theorem \ref{app_thm_3} to the partition $\mathcal{R}''$ and the collection of points $w_i(s)$. This and inequality (\ref{app7}) imply
\begin{equation}\label{app8}
\begin{aligned}
\frac{d}{ds}\left[ \frac{1}{N}P(w_i(s))\right] &\geq \frac{1}{N}\sum_{i=1}^N|\nabla P(w_i(s))| -\frac{1}{6\sqrt{d}}\\
&\geq \frac{1}{3\sqrt{d}}\int_{\mathbb{S}^d}|\nabla P(x)|d\mu_d(x)-\frac{1}{6\sqrt{d}}= \frac{1}{6\sqrt{d}}
\end{aligned}
\end{equation}
for each $P\in\partial\Omega$ and $s\in [0,r_d/3t]$.

Finally, equation (\ref{app5}) and inequalities (\ref{app6}) and (\ref{app8}) yield
\begin{equation}\label{app9}
\sum^N_{i=1}P(x_i(P)) > N\left(\frac{1}{6\sqrt{d}}\frac{r_d}{3t}-\frac{r_d}{36\sqrt{d}t}\right) > 0,\quad \forall P\in \partial \Omega.
\end{equation}
Let $C_{2,d}:=\frac{r_d}{36\sqrt{d}}$.
\end{proof}

%
%\bigskip
%%%%%%%%%%%%%%%%%%%%%%%%%
%%%%%%%%%%%%%%%%%%%%%%%%%
%%%%%%%%%%%%%%%%%%%%%%%%%
%%%%%%%%%%%%%%%%%%%%%%%%%
%\section{Proof of Theorem \ref{sct}}\label{appendix:B}
%
%\begin{proof}
%We only need to show that 
%\[
%	\sum^N_{i=1}P(x_i(P)) + \sum^N_{j=1}P(y_j) > 0 , \quad \forall P\in\partial \Omega.
%\]
%The proof is almost identical to the proof in \ref{appendix:A}, so for the sake of brevity we will only describe the lines of modifications and additions.
%
%For each $y_i, i= 1,\dots,N$, we simply let $\tilde{x}_i=y_i$. Hence, we need to show that
%\[
%	\frac{1}{N}\sum^N_{i=1}P(x_i(P)) + \frac{1}{N}\sum^N_{j=1}P(\tilde{x}_j) > 0, \quad \forall P\in\partial \Omega.
%\]
%and this is obvious since by (\ref{app5}), (\ref{app6}) and (\ref{app9}) we have
%\[
%	\frac{1}{N}\sum^N_{i=1}P(x_i(P)) + \frac{1}{N}\sum^N_{j=1}P(\tilde{x}_j) > \left(\frac{1}{6\sqrt{d}}\frac{r_d}{3t}-2\cdot\frac{r_d}{36\sqrt{d}t}\right) =0, 
%	\quad \forall P\in\partial \Omega.
%\]
%\end{proof}

%\end{appendix}

\end{document}